\numberwithin{equation}{section}
\theoremstyle{plain}
\newtheorem{theorem}{Theorem}[section]
\newtheorem{lemma}[theorem]{Lemma}
\newtheorem{proposition}[theorem]{Proposition}
\newtheorem{algorithm}[theorem]{Algorithm}
 \theoremstyle{definition}
\newtheorem{example}[theorem]{Example}
\DeclareMathOperator\R{\mathbb{R}}
\DeclareMathOperator\E{\mathbb{E}}
\DeclareMathOperator\I{\mathbb{I}}
\DeclareMathOperator\Pb{\mathbb{P}}
\DeclareMathOperator\Var{\text{Var}}
\newcommand{\mc}{\mathcal}
\newcommand{\abs}[1]{\left| #1 \right|}
\newcommand{\floor}[1]{\lfloor #1 \rfloor}
\title[Importance Sampling for Perfect Matching Problems]{Theoretical Analysis of Sequential Importance Sampling Algorithms for a Class of Perfect Matching Problems}
\author[A. Tsao]{Andy Tsao}
\address{Stanford University \\ Department of Statistics \\
Stanford CA 94305}
\email{andytsao@stanford.edu}
\subjclass[2010]{Primary: 60C05. Secondary: 60F05, 62D05}
\keywords{importance sampling, central limit theorem, bipartite matchings, generating functions}
\begin{document}
\maketitle
\begin{abstract}
  This paper analyzes the performance of sequential importance sampling algorithms for estimating the number of perfect matchings in bipartite graphs. Precise bounds on the number of samples required to yield an accurate estimate are derived. In doing so, moments of permutation statistics are computed using generating functions and nonstandard limit theorems are derived by expressing perfect matchings as a time-inhomogeneous Markov chain.
\end{abstract}
\section{Introduction}\label{sec:introduction}
\noindent Sequential importance sampling is a technique for estimating the expected value of a given function with respect to a probability measure $\nu$ using a random sample from a different probability measure $\mu$. It is widely used to evaluate otherwise intractable counting and statistical problems. This work examines the performance of sequential importance sampling on counting the number of perfect matchings in bipartite graphs. This problem can also be formulated equivalently as counting the number of permutations with positions restricted by a binary matrix.

In importance sampling, one uses a simple measure $\mu$ to obtain information about a more complicated measure $\nu$. In~\cite{cd18}, Chatterjee and Diaconis show that if $\log(d\nu/d\mu)$ is concentrated about its mean, then a sample size of roughly $e^L$ from $\mu$ is necessary and sufficient, where $L$ denotes the Kullback-Leibler divergence between $\nu$ and $\mu$. The objective for this work will be to prove limit theorems and control the tail probabilities of the quantity $\log(d\nu/d\mu)$ in the context of restricted permutations.

The remainder of this section reviews the relevant literature on matchings, restricted permutations, and sequential importance sampling. Section 2 introduces a sequential algorithm for sampling a specific type of restricted permutation. Section 3 summarizes the empirical results from using this algorithm. Sections 4, 5, and 6 analyze the moments and limiting distribution of certain statistics of restricted permutations and uses them to give a bound on the required sample size for importance sampling to give accurate results.
\subsection{Bipartite matchings}
Let $[n] = \{1, 2, \ldots, n\}$ and $[n'] = \{1', 2', \ldots, n'\}$ be two disjoint sets. A bipartite graph $G = ([n], [n'], E)$ is specified by a set of undirected edges $E = \{(i_1, i_1'), \ldots, (i_e, i_e')\}$. For example, when $n = 3$ the graph might appear as shown in Figure~\ref{fig:bipartite}.

\begin{figure}[h]
  \begin{center}
    \begin{tikzpicture}
      \draw[black, thick] (0, -1)--(2, -1);
      \draw[black, thick] (0, 0)--(2, 0);
      \draw[black, thick] (0, 1)--(2, 1);
      \draw[black, thick] (0, -1)--(2, 0);
      \draw[black, thick] (0, 0)--(2, -1);
      \draw[black, thick] (0, 1)--(2, 0);
      \draw[black, thick] (0, -1)--(2, 1);
      \filldraw[black] (0, 1) circle (2pt) node[anchor=east] {$1$};
      \filldraw[black] (2, 1) circle (2pt) node[anchor=west] {$1'$};
      \filldraw[black] (0, 0) circle (2pt) node[anchor=east] {$2$};
      \filldraw[black] (2, 0) circle (2pt) node[anchor=west] {$2'$};
      \filldraw[black] (0, -1) circle (2pt) node[anchor=east] {$3$};
      \filldraw[black] (2, -1) circle (2pt) node[anchor=west] {$3'$};
    \end{tikzpicture}
    \caption{A bipartite graph with $n = m = 3$}\label{fig:bipartite}
  \end{center}
\end{figure}
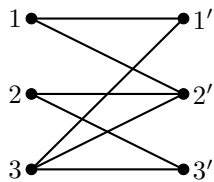

A \emph{matching} in $G$ is a set of vertex-disjoint edges. Thus $\{(1, 1'), (2, 3')\}$ is a matching in Figure~\ref{fig:bipartite}, as is the empty set. A \emph{perfect matching} in $G$ is a matching with $n$ edges. For example, the perfect matchings in Figure~\ref{fig:bipartite} are $\{(1, 1'), (2, 2'), (3, 3')\}$, $\{(1, 2'), (2, 3'), (3, 1')\}$, and $\{(1, 1'), (2, 3'), (3, 2')\}$. $\mc{M}(G)$ will be used to denote the set of perfect matchings of a graph $G$.

Matching theory is a large research area, particularly recently with ride share and organ matching applications. See~\cite{lp09} for a book-length treatment.
\subsection{Restricted permutations}\label{sec:restricted}
Given a bipartite graph $G([n], [n'], E)$, let $A_G$ denote its adjacency matrix; that is, $A_G(i, j) = \I\{(i, j') \in E\}$. The perfects matching of $G$ correspond to a subset $S_G \in S_n$ of permutations $\pi$ satisfying $A(i, \pi_i) = 1$ for all $i$. For example, if $G$ is the graph in Figure~\ref{fig:bipartite},
\[A_G = \begin{pmatrix}
  1 & 1 & 0 \\
  0 & 1 & 1 \\
  1 & 1 & 1
\end{pmatrix}\]
and $S_G = \{(123), (231), (132)\}$.

Of particular consideration are the matrices
\[A_G(i, j) = \begin{cases}
  1 &\mbox{if~} 1 \le i \le n, 1 \le j \le m, -s \le j - i \le t \\
  0 &\mbox{otherwise}
\end{cases}\]
where $s, t \ge 1$. $G$ is called the \emph{type-$(s, t)$ graph}, and the elements of $\mc{S}_G$ are called type-$(s, t)$ permutations, denoted by $\mc{F}_{n, s, t}$.

The special case of $s = t = 1$ corresponds to the Fibonacci permutations, so named because $\abs{\mc{F}_{n, 1, 1}} = F_n$, where $F_n$ is the $n^{\text{th}}$ Fibonacci number. Other well-studied cases include $\mc{F}_{n, t, 1}$ and $\mc{F}_{n, t, t}$, which are sometimes called $t$-Fibonacci permutations and distance-$t$ permutations, respectively.

Type-$(s, t)$ graphs serve as benchmarks for both numerical and theoretical purposes, and they offer challenging open problems, despite being extensively studied (\cite{cdg18},~\cite{dk19},~\cite{dgh01}). Furthermore, despite their apparent structure, they are a good approximation to graphs appearing in real datasets (see, for instance, the red shift data in~\cite{ep92}).
\subsection{Importance sampling}
Let $\mu$ and $\nu$ be two probability measures on a set $\mc{X}$ equipped with some $\sigma$-algebra. Suppose $\nu \ll \mu$, and let $\rho$ denote the density $\frac{d\nu}{d\mu}$. To estimate the quantity
\[I(f) := \int_{\mc{X}} f(y) d\nu(y) = \E_\nu f(Y)\]
using an iid sample $X_1, X_2, \ldots$ with distribution $\mu$, the \emph{importance sampling estimate} of $I(f)$ is given by
\[I_N(f) := \frac{1}{N} \sum_{i = 1}^N f(X_i) \rho(X_i).\]

The number of perfect matchings of a balanced bipartite graph $G = ([n], [n'], E)$ can be estimated using importance sampling. Taking $\nu$ to be the uniform measure, $\mu$ to be any other measure on perfect matchings, and $f = \abs{\mc{M}(G)}$, the quantity $I(f) = \abs{\mc{M}(G)}$ has the importance sampling estimate
\[I_N(f) = \frac{1}{N} \sum_{i = 1}^N \abs{\mc{M}(G)} \frac{d\nu}{d\mu}(X_i) = \frac{1}{N} \sum_{i = 1}^N \mu(X_i)^{-1},\]
where $X_1, \ldots, X_N$ are perfect matchings with distribution $\mu$.

In applications of importance sampling, the measure $\mu$ is typically chosen so that $X_1, \ldots, X_N$ are easy to sample. Diaconis~\cite{dia18} proposed the following sequential algorithm for generating perfect matchings in a bipartite graph:
\begin{algorithm}\label{alg:sequential}
  Let $v_1, \ldots, v_n$ be an enumeration of the vertices in $[n]$, and let $\pi_0$ be the empty matching. Proceeding in the order $i = 1, 2, \ldots, n$:
  \begin{itemize}
    \item
      Check each edge coming out of $v_i$ to see if its removal, and the subsequent removal of the adjacent vertices, leaves a graph allowing a perfect matching. Let $J_i$ be the set of available edges.
    \item
      Pick $e \in J_i$ uniformly. Let $\pi_i = \pi_{i = 1} \cup \{e\}$.
    \item
      This generates a random matching $\pi_n$ with probability
      \[\mu(\pi_n) = \prod_{i = 1}^n \abs{J_i}^{-1}.\]
  \end{itemize}
\end{algorithm}
It will be useful in this paper to form an equivalence between the sequence $\{J_i\}_{i = 1}^n$ and the resulting permutation $\pi$ in Algorithm~\ref{alg:sequential}. Indeed, a bijection exists between the two quantities:
\begin{itemize}
  \item
    From a permutation $\pi$, the sequence $J_1, \ldots, J_n$ is obtained by setting $J_i = E(v_i) \backslash \{\pi(v_1), \ldots, \pi(v_{i - 1})\}$, where $E(v_i)$ denotes the vertices adjacent to $v_i$.
  \item
    Conversely, a sequence $J_1, \ldots, J_n$ yields the permutation $\pi$ satisfying $\pi_i = E(v_i) \backslash \bigcup_{j = i + 1}^n J_j$.
\end{itemize}
Unless otherwise stated, the analysis of Algorithm~\ref{alg:sequential} will be of the \emph{top-down} order; that is, $v_i = i$ for all $1 \le i \le n$.

The procedure for checking if an arbitrary bipartite graph has a perfect matching is polynomial in $n$. However, this step can be done in constant time for type-$(s, t)$ graphs.
\begin{proposition}\label{prop:alg_success}
  Let $G = ([n], [n'], E)$ be a type-$(s, t)$ bipartite graph. Suppose that the vertices $\{1, 2, \ldots, i - 1\}$ have been matched by Algorithm~\ref{alg:sequential}. If $(i - s)'$ has not yet been matched, then $J_i = \{(i - s)'\}$. Otherwise, $J_i$ contains all remaining edges incident to $i$.
\end{proposition}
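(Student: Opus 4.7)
My plan is to proceed by induction on $i$, carrying the invariant that before step $i$ begins, the set $U_{i-1}$ of unmatched primes satisfies $U_{i-1} \subseteq \{(i-s)', (i-s+1)', \ldots, n'\}$. This invariant is natural because in the type-$(s,t)$ graph the prime $k'$ is adjacent only to the vertices $\{k-t, \ldots, k+s\}$; once $i > k+s$, the prime $k'$ has no remaining neighbor, so the algorithm could not complete. Taking the invariant as given, the first case of the proposition is immediate: if $(i-s)' \in U_{i-1}$, then among the neighbors $\{i-s-t, \ldots, i\}$ of $(i-s)'$, every one except $i$ has already been matched by assumption, so any choice other than $(i-s)'$ for vertex $i$ would strand $(i-s)'$. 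Hence $J_i = \{(i-s)'\}$.

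The substantive work is the second case, in which $(i-s)' \notin U_{i-1}$; I must show that \emph{every} $j' \in U_{i-1} \cap N(i)$ is feasible, i.e., that deleting the edge $(i, j')$ along with its endpoints leaves a graph admitting a perfect matching. I would construct such a matching explicitly by iterating the same greedy rule: for $k = i+1, \ldots, n$, assign $\pi_k := (k-s)'$ if that prime is still free, and otherwise pick any remaining neighbor of $k$. Maintaining the invariant $V_{k-1} \subseteq \{(k-s)', \ldots, n'\}$ along this construction is straightforward bookkeeping, so the single point to verify is that the ``otherwise'' branch is always nonempty.

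Here lies the main (though modest) obstacle: at stage $k$ there are $n - k + 1$ unmatched primes, all contained in $\{(k-s+1)', \ldots, n'\}$ in the branch where $(k-s)'$ has already been used; the primes lying outside $N(k) = \{(k-s)', \ldots, (k+t)'\}$ all lie in $\{(k+t+1)', \ldots, n'\}$, a set of size at most $n - k - t < n - k + 1$, so by pigeonhole at least $t+1$ of the available primes fall inside $N(k)$, giving a valid choice for $\pi_k$. Combining the two cases yields $J_i = U_{i-1} \cap N(i)$ and re-establishes the invariant for step $i+1$, closing the induction. The remaining inclusion $J_i \subseteq U_{i-1} \cap N(i)$ is immediate from the definition of $J_i$, since any edge in $J_i$ must be both incident to $i$ and have its other endpoint unmatched.
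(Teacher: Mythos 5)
Your proof is correct and follows essentially the same two-case strategy as the paper's: the forced-move argument when $(i-s)'$ is still free, and an explicit construction of a completing matching otherwise. The only cosmetic difference is that the paper completes the matching by assigning $i+k$ to the $k$-th smallest unmatched prime while you complete it greedily; both rest on the same observation that every unmatched prime lies in $\{(i-s)', \ldots, n'\}$.
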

Chatterjee and Diaconis~\cite{cd18} argue that the distribution of $\rho(Y) = \frac{d\nu}{d\mu}(Y)$ is key to determine the necessary and sufficient sample size for $I_n(f)$ to yield a good estimate of $I(f)$. In particular, they proved an upper bound on the necessary sample size that is directly related to the tails of $\log \rho(Y)$. Taking $\nu$ and $\mu$ to be the uniform distribution on matchings and the sampling distribution of Algorithm~\ref{alg:sequential}, respectively, yields
\[\log \rho(Y) = \log \frac{1}{\abs{\mc{M}(G)} \mu(Y)} = -\log \abs{\mc{M}(G)} - \log \mu(Y).\]
A main contribution of this work is the distributional analysis of the quantity $\log \mu(Y)$ under the uniform distibution on matchings for several classes of bipartite graphs.

\section{Related Work}\label{sec:related}
\noindent Restricted permutations appear in problems related to independence testing. One observes paired data $(X_1, Y_1), \ldots, (X_n, Y_n) \in \mc{X} \times \mc{Y}$ drawn from a joint distribution $\mc{P}$, with marginals $\mc{P}^1$ and $\mc{P}^2$. For simplicity, assume that the $X_i$'s and $Y_i$'s are all distinct. Suppose further that for each $x \in \mc{X}$ there is a known set $I(x)$ such that the pair $(X, Y)$ can be observed if and only if $Y \in I(X)$.

Suppose the goal is to test if $\mc{P} = \mc{P}^1 \times \mc{P}^2$. If $I(x) = \mc{Y}$ for all $x \in \mc{X}$, then classical theory (see, e.g.~\cite{hoe48},~\cite{bkr61},~\cite{bic69},~\cite{rom89}) tells us that under mild regularity conditions, a permutation test gives an asymptotically consistent locally most powerful test of independence. That is, let $(X_{(1)}, \ldots, X_{(n)})$ and $(Y^{(1)}, \ldots, Y^{(n)})$ be the rank-orderings of the $\{X_i\}$ and $\{Y_i\}$, respectively, and define the permutation $\pi$ to be such that $Y_{(i)} = Y^{(\pi(i))}$ for all $i \in \{1, 2, \ldots, n\}$. $X$ and $Y$ then pass the permutation test if $\pi$ looks like it came from a random draw from $S_n$.

The setting where $I(x)$ is a proper subset of $\mc{Y}$ can be modeled as a permutation test on a set of permutations with restricted positions. In this case, it is necessary to characterize a random draw from $S_{n, A_{n, n}} \subset S_n$, where $A$ is a restriction function as defined in Section~\ref{sec:restricted}. This is equivalent to evaluating the permanent of $A_{n, n}$.

Evaluating the permanent of a $\{0, 1\}$ matrix is a celebrated problem in complexity theory and was used as the first example of a \#P-complete problem by Valiant~\cite{val79}. However, while exact evaluation remains an intractable problem, efficient approximation algorithms sometimes exist.

Diaconis et.\ al.~\cite{dgh01} proposed the \emph{switch chain} for sampling perfect matchings from a balanced bipartite graph $G = ([n] \cup [n]', E)$ almost uniformly at random. The largest class of graphs for which this chain is ergodic is the class of chordal bipartite graphs. In~\cite{djm17}, Dyer et.\ al.\ examine increasingly restricted graph classes and determine that the switch chain mixes in time $O(n^7 \log n)$ for monotone graphs. This bound was later improved by Blumberg~\cite{blumthesis} to $O(n^4)$ for graphs with bounded interval restrictions.

Diaconis and Kolesnik~\cite{dk19} analyze Algorithm~\ref{alg:sequential} for $t$-Fibonacci and distance-2 matchings. They were able to prove the asymptotic normality of $\log \rho(Y)$ using a distributional recurrence Central Limit Theorem from the computer science literature. Using generating functions, Chung et.\ al.~\cite{cdg18} were also able to compute precise asymptotics for the mean and variance of $\log \rho(Y)$ for the cases $t = 1$ and $(s, t) = (2, 2)$. Moments for more general $s, t$ are open. Finally,~\cite{dk19} also analyzes two additional algorithms for $t$-Fibonacci matchings: the \emph{random} order algorithm, where $(v_1, \ldots, v_n)$ is a random permutation of $[n]$, and the \emph{greedy} order algorithm, where at each step, the smallest unmatched index $i$ is matched amongst those indices $i$ with the maximal number of remaining choices for $\pi(i)$. Central limit theorems with precise asymptotics are also derived for both of these algorithms.

\section{Results}\label{sec:results}
\noindent The contributions of this work are threefold. First, an exact formula is provided for the sampling distribution $\mu(\pi)$ of Algorithm~\ref{alg:sequential}.
\begin{proposition}\label{prop:st_sampling_formula}
  For each $y = 1, 2, \ldots, n$, Let $X_i = \I\{\pi(i) = i - s\}$and $Y_i = \min(t + 1, n - i)$. Then,
  \begin{equation}\label{eq:mu_formula}
    \mu(\pi) = \frac{(t + 1)^{t - n}}{t!} \cdot \prod_{i = 1}^n Y_i^{X_i}.
  \end{equation}
  In particular, there exist constants $c_1 = c_1(s, t)$ and $c_2 = c_2(s, t)$ such that
  \[c_1 (t + 1)^{\theta(\pi) - n} \le \mu(\pi) \le c_2 (t + 1)^{\theta(\pi) - n},\]
  where $\theta(\pi) = \sum_{i = 1}^n X_i = \abs{\{i: \pi(i) = i - s\}}$.
\end{proposition}
Next, this work extends the results of Diaconis and Kolesnik in~\cite{dk19}. The following distributional result holds for arbitrary positive integers $s$ and $t$:
\begin{theorem}\label{thm:importance_sample_size}
  Let $G = ([n], [n'], E)$ be the bipartite graph with type-$(s, t)$ restriction, and let $\mu(\pi)$ be the sampling distribution of Algorithm~\ref{alg:sequential} when $v_i = i$ for $1 \le i \le n$. Then, there exist positive constants $c_1, c_2$ such that
  \begin{align}
    \E_\nu \log \rho(Y) + \log \abs{\mc{M}(G)} &= c_1 n + o(n) \label{eq:exp_logrho}\\
    \Var_\nu \log \rho(Y) &= c_2 n + o(n) \label{eq:var_logrho}
  \end{align}
  Furthermore, as $n \rightarrow \infty$,
  \[\frac{\log \rho(Y) - \E_\nu \log \rho(Y)}{\sqrt{\Var_\nu \log \rho(Y)}} \overset{d}\rightarrow N(0, 1).\]
\end{theorem}
The implication of Theorem~\ref{thm:importance_sample_size} and the result in~\cite{cd18} is that Algorithm~\ref{alg:sequential} converges after $N_{conv} \approx \exp(c_1 n + \sqrt{c_2 n})$ samples. Since it takes time $O(n)$ to generate a single perfect matching, the aggregate runtime of sequential importance sampling is $O(N_{conv}n) = O(n \exp(c_1 n + \sqrt{c_2 n}))$. At first glance, this is clearly inferior to the $O(n^7 \log n)$ runtime for monotone graphs, given in~\cite{djm17}, or the $O(n^4)$ runtime for graphs with bounded interval restrictions, given in~\cite{blumthesis}. However, it turns out that Algorithm~\ref{alg:sequential} has some merit, as the constants $c_1$ and $c_2$ are often very small. As can be seen in Table~\ref{tab:runtime_comparison}, $n$ needs to be quite large to justify using either MCMC algorithm over importance sampling.
\begin{table}[h]
  \centering
  \begin{tabular}{c|c|c}
    $(s, t)$ & $N_1$ & $N_2$ \\ \hline
    $(2, 1)$ & 1035 & 2592 \\
    $(3, 1)$ & 2049 & 5018 \\
    $(4, 1)$ & 4332 & 10415 \\
    $(5, 1)$ & 9319 & 22071 \\
    $(6, 1)$ & 20115 & 47056 \\
    $(7, 1)$ & 43358 & 100399 \\
    $(3, 2)$ & 308 & 804
  \end{tabular}
  \caption{Comparison of importance sampling and the switch chain. $N_1$ and $N_2$ are the sample sizes below which importance sampling outperforms switch chain bounds of $O(n^4)$ for bounded interval restrictions and $O(n^7 \log n)$ for monotone bipartite graphs, respectively.}
  \label{tab:runtime_comparison}
\end{table}
\subsection{Optimal sampling probabilities}
\noindent In addition to proving Theorem~\ref{thm:importance_sample_size}, this paper also considers a modification of Algorithm~\ref{alg:sequential}, where edges are picked from the available set nonuniformly at each step. More precisely, Let $P_{j, J}$ be a family of probability distributions, indexed by $j \in [n]$ and $J \subseteq [n']$.
\begin{algorithm}[Nonuniform sequential algorithm]\label{alg:sequential_opt}
  Let $v_1, \ldots, v_n$ be an enumeration of the vertices in $[n]$. Beginning at $v_1$ and proceeding in order:
  \begin{itemize}
    \item
      Check each edge coming out of $v_1$ to see if its removal, and the subsequent removal of the adjacent vertices, leaves a graph allowing a perfect matching. Let $J_1$ be the set of available edges. Pick $e \in J_1$ according to the distribution $P_{1, J_1}$ and delete this edge.
    \item
      Repeat with $v_2$ by forming $J_2$ and sampling from $P_{2, J_2}$, and continue until a perfect matching is found.
    \item
      This generates a random matching $\pi$ with probability
      \[\mu^*(\pi) = \prod_{i = 1}^n P_{i, J_i}(\pi(i)).\]
  \end{itemize}
\end{algorithm}
It is immediately clear that choosing $P_{j, J}$ to be the distribution of $\pi(j)$ conditioned on $\pi(1), \ldots, \pi(j - 1)$ makes $\mu^*$ the uniform distribution on allowed matchings. However, explicitly computing these conditional distributions is impractical for all but the simplest bipartite graphs.

Diaconis and Kolesnik~\cite{dk19} analyze the top-down version of Algorithm~\ref{alg:sequential_opt} (where $v_i = i$ for all $i$) for Fibonacci, 2-Fibonacci, and distance-2 graphs. They show that, for these graphs, it is possible to choose $P_{j, J}$ from a much smaller family of distributions such that Algorithm~\ref{alg:sequential_opt} yields a sampling distribution with bounded derivative $\frac{d\nu}{d\mu^*}$. An example of their results for Fibonacci graphs is as follows:
\begin{proposition}\label{prop:opt_bounded_deriv}
  For a set of two integers $J = \{j_1, j_2\}$ with $j_1 < j_2$, let $Q_J$ be the distribution that assigns mass $1/\varphi$ to $j_1$ and $1/\varphi^2$ to $j_2$. Let $P_{j, J} = Q_J$ whenever $\abs{J} = 2$. Then, the resulting sampling distribution $\mu^*$ has bounded derivative $\frac{d\nu}{d\mu^*}$ with respect to the uniform distribution $\nu$.
\end{proposition}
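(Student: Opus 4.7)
The plan is to compute $\mu^*(\pi)$ in closed form for each $\pi \in \mc{F}_{n,1,1}$ and then bound $\nu(\pi)/\mu^*(\pi)$ using Binet's formula. First I would specialize Proposition~\ref{prop:alg_success} to $s = t = 1$ to control the sizes $|J_i|$ produced by the algorithm. For $1 < i < n$, one has $|J_i| = 1$ (and $J_i = \{(i-1)'\}$) exactly when $(i-1)'$ is still unmatched, and $|J_i| = 2$ (with $J_i = \{i', (i+1)'\}$) otherwise. At the boundaries, $|J_1| = 2$ always (the proposition's forcing hypothesis is vacuous since $0'$ does not exist), and $|J_n| = 1$ always (vertex $n$ has only the neighbors $(n-1)', n'$ and exactly one of them is still available).

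Next I would use the standard fact that every $\pi \in \mc{F}_{n,1,1}$ decomposes uniquely into blocks of two types: \emph{fixed points} $\pi(i) = i'$ (one position) and \emph{adjacent transpositions} $\pi(i) = (i+1)', \pi(i+1) = i'$ (two positions). Walking Algorithm~\ref{alg:sequential_opt} block-by-block shows that a fixed-point block at a non-terminal position $i < n$ contributes $\varphi^{-1}$ to $\mu^*(\pi)$ (the $Q_{J_i}$-mass of $j_1 = i'$), a transposition block contributes $\varphi^{-2} \cdot 1$ (the $Q_{J_i}$-mass of $j_2 = (i+1)'$ followed by the forced step at $i+1$), and a fixed-point block terminating at $i = n$ contributes $1$, since $|J_n| = 1$. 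Summing the exponents gives
\[
\mu^*(\pi) \;=\; \varphi^{-n + \I\{\pi(n) = n'\}} \;\in\; \bigl\{\varphi^{-n},\; \varphi^{-(n-1)}\bigr\},
\]
so $\mu^*$ takes only two distinct values on $\mc{F}_{n,1,1}$.

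To finish, I would invoke Binet's formula $F_{n+1} = (\varphi^{n+1} - \psi^{n+1})/\sqrt{5}$ with $\psi = -1/\varphi$. Since $|\psi| < 1$, we obtain $F_{n+1} \ge c\, \varphi^n$ for an absolute constant $c > 0$ independent of $n$, hence
\[
\frac{d\nu}{d\mu^*}(\pi) \;=\; \frac{1}{F_{n+1}\, \mu^*(\pi)} \;\le\; \frac{\varphi^n}{F_{n+1}} \;\le\; c^{-1},
\]
uniformly in $n$ and $\pi$. The only delicate step is the boundary bookkeeping at $i = 1$ and $i = n$; everything else is forced by the defining identity $\varphi^{-1} + \varphi^{-2} = 1$, which both makes $Q_{J_i}$ a probability distribution on $\{j_1, j_2\}$ and causes the product over blocks to collapse to an almost constant function of $\pi$. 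No generating-function machinery is needed for this particular proposition.
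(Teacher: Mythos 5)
Your proof is correct. Note that the paper does not actually prove this proposition itself --- it quotes it from~\cite{dk19} and instead proves the $t$-Fibonacci generalization, Theorem~\ref{thm:opt_sampling}, whose proof runs through the cycle decomposition of $\mc{F}_{n,t,1}$, a comparison of $\log\mu^*(\pi)$ and $\log\mu^*(\pi')$ for $\pi'=(i,i+1)\circ\pi$, and a telescoping argument along a path of adjacent transpositions, concluding with an implicit averaging step (if all values of $\mu^*$ agree within a multiplicative constant, so does $d\nu/d\mu^*$). Your blocks are exactly the paper's cycles, and the collapse you observe via $\varphi^{-1}+\varphi^{-2}=1$ is the $t=1$ instance of the paper's telescoping identity $(1-p_1)^k=p_1\cdots p_{k-1}(1-p_k)$; so the structural core is the same. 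What your route buys is a sharper and more explicit conclusion: you get the closed form $\mu^*(\pi)=\varphi^{-n+\I\{\pi(n)=n'\}}$, hence the exact two-point range of $\mu^*$ and, via Binet, the limiting value $\varphi^n/F_{n+1}\to\sqrt{5}/\varphi$ of the bound, whereas the paper's path argument only yields the cruder $2\log 2$ bound on $\abs{\log(d\nu/d\mu^*)}$ but generalizes painlessly to all $t$, where no closed form for $\mu^*(\pi)$ is as clean. The one place to be slightly careful in your write-up is the indexing of the Fibonacci numbers (the paper's convention has $\abs{\mc{F}_{n,1,1}}=F_n$ with $F_1=1$, $F_2=2$), but this only shifts the constant, not the boundedness.
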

A direct consequence of this type of result is that importance sampling using the distribution $\mu^*$ converges after a bounded number of samples. The final contribution of this paper will be the construction of a simple family $P_{j, J}$ for type-$(s, t)$ graphs such that $\log \rho(Y)$ is bounded.

The remainder of this paper is organized as follows. Section~\ref{sec:markov} constructs a bijection between matchings of type-$(s, t)$ graphs and Markovian sequences and uses it to prove Theorem~\ref{thm:importance_sample_size} and derive ``almost-perfect'' sampling probabilities. Section~\ref{sec:moments} computes moments of $\log \rho(Y)$ using generating functions. Conclusions and ideas for further research are given in Section~\ref{sec:future}. Finally, Section~\ref{sec:proofs} contains the derivations of all unproven claims throughout the chapter.

\section{Restricted permutations as Markov chains}\label{sec:markov}
\noindent A key observation for the analysis of Algorithm~\ref{alg:sequential} is that a uniform draw from $\mc{M}_{n, s, t}$ can be expressed as a time-inhomogeneous Markov chain, where the transition matrices have entries that are bounded by functions of $s$ and $t$. Distributional limits of functions of these Markov chains were first studied by Dobrushin~\cite{dob56} and later refined in~\cite{sv05} and~\cite{pel12}. The following result is due to Peligrad~\cite{pel12} and establishes conditions on the maximal correlation coefficient between adjacent states $X_i$ and $X_{i + 1}$ under which a central limit theorem would hold.
\begin{theorem}[\cite{pel12}, Theorem 1]\label{thm:TIMC_clt}
  Let $X_{n, 1}, \ldots, X_{n, n} \in \mc{X}$ be a time-inhomogeneous Markov chain. Let $\rho(\cdot, \cdot)$ denote the maximal correlation function; that is, for $\sigma$-algebras $\mc{F}_1, \mc{F}_2$,
  \[\rho = \sup_{f, g} \E(fg),\]
  where $f$ and $g$ are functions with mean zero and variance one which are measurable with respect to $\mc{F}_1$ and $\mc{F}_2$, respectively. Define
  \[\lambda_n = \min_{1 \le s \le n - 1} [1 - \rho(\sigma(X_{n, s}), \sigma(X_{n, s + 1}))],\]
  Let $Y_{n, i} = f_{n, i}(X_{n, i})$, where $(f_{n, i})_{1 \le i \le n}$ are real-valued functions on $\mc{X}$. Denote by $\mu_n$ and $\sigma^2_n$, respectively, the mean and variance of $\sum_{i = 1}^n Y_{n, i}$. Suppose
  \begin{equation}\label{eq:CLT_conditions_1}
    \max_{1 \le i \le n} \abs{Y_{n, i}} \le C_n \text{~a.s.}
  \end{equation}
  and
  \begin{equation}\label{eq:CLT_conditions_2}
    \frac{C_n(1 + \abs{\ln(\lambda_n)})}{\lambda_n \sigma_n} \rightarrow 0 \text{~as~} n \rightarrow \infty.
  \end{equation}
  Then
  \begin{equation}\label{eq:clt_statement}
    \frac{\sum_{i = 1}^n Y_{n, i} - \mu_n}{\sigma_n} \overset{d}\rightarrow N(0, 1).
  \end{equation}
\end{theorem}
In order to apply this result, a Markov chain representation of type-$(s, t)$ permutations must be constructed.
\subsection{Type-$(s, t)$ sequences}\label{subsec:st_sequence}
The state space for the Markov chain are sequences of integers $x = (n_1, \ldots, n_t)$ satisfying $s \ge n_1 \ge n_2 \cdots \ge n_t$. Let $\mc{X}_{s, t}$ denote the set of all such sequences. Further, let $\mc{X}'_{s, t} \subset \mc{X}_{s, t}$ denote the subset of sequences with $n_1 = s$.

For each each state $\mc{X}_{s, t} \ni x = (n_1, \ldots, n_t)$, let $A_{n, x}$ denote the binary matrix satisfying the following conditions:
\begin{itemize}
  \item
    If $j - i < -s$ or $j - i > t$, then $A_{n, x}(i, j) = 0$
  \item
    For all $i \le t$, if $j - i < -s + n_i$, then $A_{n, x}(i, j) = 0$
  \item
    For all other pairs $(i, j)$, $A_{n, x}(i, j) = 1$
\end{itemize}
For example, for $n = 8$, $s = 3$, $t = 2$, and $x = (2, 1)$,
\[A_{n, x} = \begin{pmatrix}
  1 & 1 & 1 & 0 & 0 & 0 & 0 & 0 \\
  1 & 1 & 1 & 1 & 0 & 0 & 0 & 0 \\
  {\color{red} 0} & 1 & 1 & 1 & 1 & 0 & 0 & 0 \\
  {\color{red} 0} & 1 & 1 & 1 & 1 & 1 & 0 & 0 \\
  0 & {\color{red} 0} & 1 & 1 & 1 & 1 & 1 & 0 \\
  0 & 0 & 1 & 1 & 1 & 1 & 1 & 1 \\
  0 & 0 & 0 & 1 & 1 & 1 & 1 & 1 \\
  0 & 0 & 0 & 0 & 1 & 1 & 1 & 1
\end{pmatrix}\]
Note that, compared to $A_{n, (0, 0)}$, there are two extra zeroes in the first column and one extra zero in the second column.
\begin{proposition}\label{prop:st_sequence_permanent}
  For any $\mc{X}_{s, t} \ni x = (n_1, \ldots, n_t)$ and any $j \in \{1, 2, \ldots, t\}$, define
  \[T_j(x) = (n_1 + 1, \ldots, n_{j - 1} + 1, n_{j + 1}, \ldots, n_t, 0).\]
  Further, define $T_{t + 1}(x) = (n_1 + 1, \ldots, n_t + 1)$. If $x \not \in \mc{X}'_{s, t}$, then
  \begin{equation}
    \abs{A_{n, x}} = \sum_{j = 0}^t \abs{A_{n - 1, T_j(x)}}
  \end{equation}
  If $x \in \mc{X}'_{s, t}$, then
  \begin{equation}
    \abs{A_{n, x}} = \abs{A_{n - 1, T_1(x)}}
  \end{equation}
\end{proposition}

\begin{proof}
  The permanent of any matrix $A \in \mc{R}^{n \times n}$ is given by
  \[\abs{A} = \sum_{i = 1}^n A_{1i} \abs{A(1, i)},\]
  where for any $1 \le i, j \le n$, $A(i, j)$ denotes the matrix obtained by deleting the $i^{\text{th}}$ row and $j^{\text{th}}$ column from $A$.

  For any $1 \le i \le t + 1$, the matrix $A_{n, x}(1, i)$ is precisely $A_{n - 1, T_i(x)}$. As $A_{n, x}$ is a binary matrix,
  \[\abs{A_{n, x}} = \sum_{i = 1}^{t + 1} \abs{A_{n, x}(1, i)} = \sum_{j = 1}^{t + 1} \abs{A_{n - 1, T_j(x)}}.\]
  When $x \in \mc{X}'_{s, t}$, the first column of $A_{n, x}(1, i)$ is zero for all $i > 1$, and so
  \[\abs{A_{n, x}} = \abs{A_{n, x}(1, 1)} + \sum_{i = 2}^{t + 1} \abs{A_{n, x}(1, i)} = \abs{A_{n - 1, T_1(x)}}.\]
\end{proof}
In what follows, $x^*$ will be used to denote the state $(0, \ldots, 0)$. $A_{n, x^*}$ is simply the adjacency matrix of the type-$(s, t)$ graph, so $\abs{A_{n, x^*}} = \abs{\mc{M}_{n, s, t}}$. Additionally, since $A_{n, x^*}$ has entries at least as large as $A_{n, x}$ for any $x \in \mc{X}_{s, t}$, it follows that
\begin{equation}\label{eq:A_inequality}
  \abs{A_{n, x^*}} = \max_{x \in \mc{X}_{s, t}} \abs{A_{n, x}}
\end{equation}
The connection between these sequences and bipartite matchings is given in the following proposition.
\begin{proposition}\label{prop:sequence_matching_bijection}
  Let $\mc{M}_{n, x}$ be the set of matchings $\pi$ such that $A_{n, x}(i, \pi(i)) = 1$ for all $i$. Then, there exists a bijection between $\mc{M}_{n, x}$ and sequences $x_1, \ldots, x_n \in \mc{X}_{s, t}$ with the following properties:
  \begin{itemize}
    \item[a.]
      $x_1 = x$
    \item[b.]
      For all $i = 1, 2, \ldots, n - 1$, there exists $j_i \in \{1, 2, \ldots, \min(t + 1, n - i + 1)\}$ such that $x_{i + 1} = T_{j_i}(x_i)$
  \end{itemize}
\end{proposition}

As the sequence $x_1, \ldots, x_n$ is Markovian, imposing transition probabilities induces a distribution on type-$(s, t)$ matchings. In particular, with the time-dependent transition matrices
\begin{equation}\label{eq:st_transitions}
  K_i(x_i, T_j(x_i)) = \frac{\abs{A_{n - i, T_j(x_i)}}}{\abs{A_{n - i + 1, x_i}}},
\end{equation}
the resulting sequence is uniformly distributed on the space of type-$(s, t)$ sequences, resulting in an induced uniform distribution on matchings.

\begin{example}
  Suppose $n = 5$ and $s = t = 2$. The graph given by Figure~\ref{fig:22_graph}.
  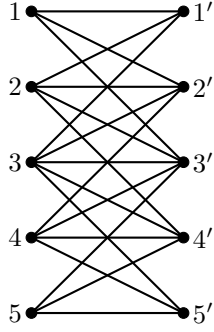
\begin{figure}
    \begin{center}
      \begin{tikzpicture}
        \draw[black, thick] (0, -2)--(2, -2);
        \draw[black, thick] (0, -2)--(2, -1);
        \draw[black, thick] (0, -2)--(2, -0);
        \draw[black, thick] (0, -1)--(2, -2);
        \draw[black, thick] (0, -1)--(2, -1);
        \draw[black, thick] (0, -1)--(2, 0);
        \draw[black, thick] (0, -1)--(2, 1);
        \draw[black, thick] (0, 0)--(2, -2);
        \draw[black, thick] (0, 0)--(2, -1);
        \draw[black, thick] (0, 0)--(2, 0);
        \draw[black, thick] (0, 0)--(2, 1);
        \draw[black, thick] (0, 0)--(2, 2);
        \draw[black, thick] (0, 1)--(2, -1);
        \draw[black, thick] (0, 1)--(2, 0);
        \draw[black, thick] (0, 1)--(2, 1);
        \draw[black, thick] (0, 1)--(2, 2);
        \draw[black, thick] (0, 2)--(2, 0);
        \draw[black, thick] (0, 2)--(2, 1);
        \draw[black, thick] (0, 2)--(2, 2);
        \filldraw[black] (0, 2) circle (2pt) node[anchor=east] {$1$};
        \filldraw[black] (2, 2) circle (2pt) node[anchor=west] {$1'$};
        \filldraw[black] (0, 1) circle (2pt) node[anchor=east] {$2$};
        \filldraw[black] (2, 1) circle (2pt) node[anchor=west] {$2'$};
        \filldraw[black] (0, 0) circle (2pt) node[anchor=east] {$3$};
        \filldraw[black] (2, 0) circle (2pt) node[anchor=west] {$3'$};
        \filldraw[black] (0, -1) circle (2pt) node[anchor=east] {$4$};
        \filldraw[black] (2, -1) circle (2pt) node[anchor=west] {$4'$};
        \filldraw[black] (0, -2) circle (2pt) node[anchor=east] {$5$};
        \filldraw[black] (2, -2) circle (2pt) node[anchor=west] {$5'$};
      \end{tikzpicture}
      \caption{The type-$(2, 2)$ graph with $n = 5$}\label{fig:22_graph}
    \end{center}
  \end{figure}
  The states in $\mc{X}_{s, t}$ are the pairs
  \[\{(0, 0), (1, 0), (1, 1), (2, 0), (2, 1), (2, 2)\}.\]
  Table~\ref{tab:22_permutation_sequence_map} shows the type-$(2, 2)$ sequences for several different type-$(2, 2)$ permutations.
  \begin{table}[h]
    \centering
    \begin{tabular}{c|c}
      $\pi$ & Type-$(2, 2)$ sequence \\ \hline
      $12345$ & $(0, 0), (0, 0), (0, 0), (0, 0), (0, 0)$ \\
      $23154$ & $(0, 0), (1, 0), (2, 0), (0, 0), (1, 0)$ \\
      $21435$ & $(0, 0), (1, 0), (0, 0), (1, 0), (0, 0)$ \\
      $31245$ & $(0, 0), (1, 1), (1, 0), (0, 0), (1, 0)$
    \end{tabular}
    \caption{Type-$(2, 2)$ sequences for various permutations}
    \label{tab:22_permutation_sequence_map}
  \end{table}
\end{example}

The remainder of the chapter will work with type-$(s, t)$ sequences instead of type-$(s, t)$ matchings. Therefore, it is helpful to rewrite Algorithm~\ref{alg:sequential} as an algorithm that samples elements of $\mc{X}^n_{s, t}$.
\begin{algorithm}\label{alg:sequential_sequence}
  Initialize $x_1 = x^*$. Given $x_1, \ldots, x_i$, for some $i \ge 1$:
  \begin{itemize}
    \item
      If $x_i = (n_{i1}, \ldots, n_{it})$ and $n_{i1} = s$, then set $x_{i + 1} = T_1(x_i)$ with probability 1. Otherwise, set $x_{i + 1} = T_I(x_i)$, where $I$ is uniformly chosen from $\{1, 2, \ldots, \min(t + 1, n - i + 1)\}$.
    \item
      This generates a random sequence $(x_1, \ldots, x_n) = X$ with probability
      \[\mu(X) = (t + 1)^{\theta(X) - n},\]
      where $\theta(X) = \abs{\{j: n_{j1} = s\}}$.
  \end{itemize}
\end{algorithm}

\subsection{Central limit theorem}
This section revisits Theorem~\ref{thm:TIMC_clt} and shows that the required conditions hold for the type-$(s, t)$ Markov chain. First, the variables $X_{n, 1}, \ldots, X_{n, n}$ are a realization of the Markov chain with transition matrices given by~\eqref{eq:st_transitions}, and so $X_{n, i} \in \mc{X}_{s, t}$ for all $n, i$. $Y_{n, i}$ is the indicator variable that $X_{n, i} \in \mc{X}'_{s, t}$, and so $C_n = 1$.

\begin{proposition}\label{prop:CLT_conditions}
  Let $\sigma_n$ and $\lambda_n$ be as defined in Theorem~\ref{thm:TIMC_clt}. Then,
  \begin{itemize}
    \item[1.]
      $\sigma_n \rightarrow \infty$ as $n \rightarrow \infty$.
    \item[2.]
      $\lambda_n \ge \epsilon > 0$ for some $\epsilon$ independent of $n$.
  \end{itemize}
\end{proposition}
With $C_n = 1$ and the results established by Proposition~\ref{prop:CLT_conditions}, the conditions~\eqref{eq:CLT_conditions_1} and~\eqref{eq:CLT_conditions_2} are satisfied, therefore proving the central limit theorem for $\theta(Y)$.
\begin{proof}[Proof of Proposition~\ref{prop:CLT_conditions}]
  The proof of part 1 relies on the following observations.
  \begin{itemize}
    \item[(a)]
      The diameter of the state space is at most $2t$. Indeed, for any states $x = (n_1, \ldots, n_t)$ and $y = (m_1, \ldots, m_t)$, $T_1^t(x) = x^*$ and
      \[T_{t + 1}^{t - m_t} T_1 T_{t + 1}^{m_t - m_{t - 1}} T_1 \cdots T_1 T_{t + 1}^{m_2 - m_1}(x^*) = y.\]
    \item[(b)]
      For all $i$, the nonzero entries of $K_i$ can be bounded away from zero:
      \[\frac{\abs{A_{n - i, T_j(x)}}}{\abs{A_{n - i + 1, x}}} \ge \frac{\abs{A_{n - i - t, x^*}}}{\abs{A_{n - i + 1, x^*}}} \ge \frac{1}{(t + 1)!}\]
  \end{itemize}
  \noindent $\sigma_n^2$ is the variance of the number of times the Markov chain visits $\mc{X}'_{s, t}$. Given states $x, y \in \mc{X}_{s, t}$, let $V(x, y, m)$ denote the number of visits to $\mc{X}'_{s, t}$ in a uniformly chosen sequence $(x_1, \ldots, x_m)$, conditioned on the event that $x_1 = x$ and $x_m = y$. Then, from the above observations, for $m = 10t$, $V(x, y, m)$ is a nonzero random variable with variance between $\delta_1(s, t)$ and $\delta_2(s, t)$, where $\delta_1(s, t) < \delta_2(s, t)$ are quantities that are independent of $n$.

  Fix states $x_{n, m}, x_{n, 2m}, \ldots, x_{n, m \cdot \floor{n/m}}$ and condition on the event $E$ that $X_{n, km} = x_{n, km}$ for $1 \le k \le \floor{n/m}$. Under this conditioning, the distribution of the states $Y_k = X_{n, km}, \ldots, X_{n, (k + 1)m}$ is the uniform distribution over type-$(s, t)$ sequences of length $m$ starting at $x_{n, km}$ and ending at $x_{n, (k + 1)m}$. Furthermore, $Y_1, \ldots, Y_{\floor{n/m} - 1}$ are conditionally independent due to the Markov property. Thus, the variance of $\sum_{i = 1}^n Y_{n, i}$ conditional on $E$ is
  \[(\floor{n/m} - 1) \delta_1(s, t) = \sum_{j = 1}^{\floor{n/m} - 1} \delta_1(s, t) \le \Var\left( \sum_{i = 1}^n Y_{n, i} \mid E\right) \le \sum_{j = 1}^{\floor{n/m} - 1} \delta_2(s, t) = (\floor{n/m} - 1) \delta_2(s, t).\]
  The Law of Total Variance therefore implies that
  \begin{equation}\label{eq:markov_variance_lower_bound}
    \Var\left( \sum_{i = 1}^n Y_{n, i} \right) \ge \E\left( \Var\left( \sum_{i = 1}^n Y_{n, i} \mid E \right) \right) = \Theta(n),
  \end{equation}
  meaning $\sigma_n = \Omega(\sqrt{n})$.

  For part 2, let $H_{s, t}$ denote the directed graph with vertex set $\mc{X}_{s, t}$ and an edge from $x$ to $y$ for all $y$ such that $y = T_i(x)$ for some $1 \le i \le t + 1$. Let $M_{s, t}$ denote its adjacency matrix.
  \begin{lemma}\label{lem:num_paths_ratio_convergence}
    Let $\lambda$ be the eigenvalue of the adjacency matrix of $H_{s, t}$ of maximum norm. Then,
    \begin{itemize}
      \item[1.]
        $\lambda$ is simple and real, and the corresponding right eigenvector $v$ can be chosen to have all positive coordinates.
      \item[2.]
        There exist $0 < \delta < 1$ and $N_\delta > 0$, both independent of $n$, such that for all $n > N_\delta$ and all $x, y \in \mc{X}_{s, t}$,
        \[\frac{\abs{A_{n, x}}}{\abs{A_{n, y}}} = \frac{v_x}{v_y} \left( 1 + O(e^{-n}) \right),\]
        where $v_x$ and $v_y$ are the $x$ and $y$ coordinates of $v$, respectively.
    \end{itemize}
  \end{lemma}

  \noindent The maximal correlation coefficient of $X_i$ and $X_{i + 1}$ can be expanded as follows:
  \begin{align}
    \rho(X_i, X_{i + 1}) &= \sup_{f, g} \E(f(X_i) g(X_{i + 1})) \nonumber \\
    &= \sup_{f, g} \E\left( \E(f(X_i) g(X_{i + 1}) \mid X_i) \right) \nonumber \\
    &= \sup_{f, g} \E\left( f(X_i) \E(g(X_{i + 1}) \mid X_i) \right) \nonumber \\
    &= \sup_{f, g} \E\left( f(X_i) \sum_{j = 1}^{t + 1} g(T_j(X_i)) K_{n, i}(x_i, T_j(x_i)) \right), \label{eq:max_correlation}
  \end{align}
  where the supremum is taken over all mean-zero, unit-variance functions $f$ and $g$.

  $\rho(X_i, X_{i + 1}) = 1$ if and only if there exist a pair of non-degenerate functions $f, g$ such that $f(X_i) = g(X_{i + 1})$ with probability 1. Such a pair exists if and only if the graph $H_{s, t}$ is bipartite; this is not the case, as $H_{s, t}$ has a self-loop at $x^*$.

  Secondly, observe that~\eqref{eq:max_correlation} gives that $\rho(X_i, X_{i + 1})$ is a convex function of the transition matrix $K_{n, i}$. Let $\mc{S}$ denote the set of permutations $\sigma \in S_{\abs{\mc{X}_{s, t}}}$ such that $K^*(i, \sigma(i)) > 0$ for all $i$. Next, define $\mc{C}_{s, t}$ to be the convex polytope in $\R^{\abs{\mc{X}_{s, t}}^2}$ with extreme points given by $\{P_\sigma\}_{\sigma \in \mc{S}}$, where $P_\sigma$ is the permutation matrix associated to $\sigma$.

  By Lemma~\ref{lem:num_paths_ratio_convergence}, there exists some $\epsilon, N_\epsilon > 0$, both independent of $n$, such that for all $n - i > N_\epsilon$, $K_{n, i} \in K^* \pm \epsilon \mc{C}_{s, t}$. Thus, $\rho(X_i, X_{i + 1})$ takes its maximum value at one of the vertices of $K^* \pm \epsilon \mc{C}_{s, t}$. Since the number of vertices is a function of $s, t$ and is independent of $n$, it therefore follows that $\rho(X_i, X_{i + 1})$ is bounded away from 1 for all $i$ satisfying $n - i > N_\epsilon$. Finally, the observation that the terms $Y_{n, j}$ for $n - j < N_\epsilon$ have negligible contribution to the left-hand side of~\eqref{eq:clt_statement} finishes the proof of the central limit theorem for $\theta(Y)$.
\end{proof}

\subsection{``Almost-perfect'' sampling}
\noindent This section constructs explicit sampling probabilities under which the log-density $\log \rho(Y)$ is a bounded random variable. In combination with the result of Chatterjee and Diaconis~\cite{cd18}, this gives an ``almost-perfect'' sampling algorithm; that is, only $O(1)$ samples are necessary and sufficient for importance sampling to converge.

A first idea is to sample using the probabilities
\[K_{n, i}(x_i, T_j(x_i)) = \frac{\abs{A_{n - i, T_j(x_i)}}}{\abs{A_{n - i + 1, x_i}}}.\]
Indeed, a matching sampled in this manner is \emph{exactly} uniformly distributed. However, computing these sampling probabilities requires evaluating the permanent of a large matrix, a computationally infeasible task. The goal of this section is to find easily computable probabilities that adequately approximate the uniform distribution.

Lemma~\ref{lem:num_paths_ratio_convergence} gives an indication of what the optimal sampling probabilities should be. Let $X_1, \ldots, X_n$ be a random sequence of elements in $\mc{X}_{s, t}$ satisfying
\begin{equation}\label{eq:st_opt_sampling}
  P(X_{i + 1} = T_j(x_i) \mid X_i = x_i) = K^*(x_i, T_j(x_i)) = \frac{v_{T_j(x_i)}}{\sum_{k = 1}^{t + 1} v_{T_k(x_i)}}
\end{equation}
Then, the sampling probability of the sequence $X_1, \ldots, X_n$ is given by
\[\tilde{\mu}(x_1, \ldots, x_n) = \I\{x_1 = x^*\} \prod_{i = 2}^n K^*(x_{i - 1}, x_i),\]
with density respect to the uniform given by
\[\frac{d\mu^*}{d\tilde{\mu}}(x_1, \ldots, x_n) = \prod_{i = 2}^n \frac{K_{n, i}(x_{i - 1}, x_i)}{K^*(x_{i - 1}, x_i)}.\]
By Lemma~\ref{lem:num_paths_ratio_convergence}, each factor in the product is of order $1 + O(e^{-n})$. Thus,
\[\frac{d\mu^*}{d\tilde{\mu}}(x_1, \ldots, x_n) = (1 + O(e^{-n}))^n = O(1).\]
\begin{example}[Fibonacci matchings]
  In the case $s = t = 1$, the state space $\mc{X}_{1, 1}$ consists of the two states $\{0, 1\}$, with the allowed transitions $0 \rightarrow 0, 0 \rightarrow 1$, and $1 \rightarrow 1$. The graph $H_{s, t}$ therefore has adjacency matrix
  \[M_{1, 1} = \begin{pmatrix}
    1 & 1 \\
    1 & 0
  \end{pmatrix}\]
  $M_{1, 1}$ has right Perron-Frobenius eigenvector $v^T = (\varphi, 1)$, where $\varphi = \frac{1 + \sqrt{5}}{2}$. By~\eqref{eq:st_opt_sampling}, the ``almost-perfect'' sampling probabilities are
  \begin{align*}
    P(X_{i + 1} = 0 \mid X_i = 0) &= \frac{1}{\varphi} \\
    P(X_{i + 1} = 1 \mid X_i = 0) &= \frac{1}{\varphi^2} \\
    P(X_{i + 1} = 0 \mid X_i = 1) &= 1.
  \end{align*}
  When sampling matchings, this yields
  \begin{align*}
    P(\pi_i = i) &= \frac{1}{\varphi} \\
    P(\pi_i = i + 1) &= \begin{cases}
      \frac{1}{\varphi^2} &\mbox{if~} (i - 1)' \text{~has been matched} \\
      0 &\mbox{otherwise}
    \end{cases}
  \end{align*}
\end{example}
\begin{example}[Distance-2 matchings]
  In the case $s = t = 2$, the state space $\mc{X}_{2, 2}$ consists of the six states
  \[\mc{X}_{2, 2} = \{(0, 0), (1, 0), (1, 1), (2, 0), (2, 1), (2, 2)\}.\]
  The state space graph $H_{2, 2}$ has the adjacency matrix
  \[M_{2, 2} = \begin{pmatrix}
    1 & 1 & 1 & 0 & 0 & 0 \\
    1 & 0 & 0 & 1 & 1 & 0 \\
    0 & 1 & 0 & 1 & 0 & 1 \\
    1 & 0 & 0 & 0 & 0 & 0 \\
    0 & 1 & 0 & 0 & 0 & 0 \\
    0 & 0 & 0 & 1 & 0 & 0
  \end{pmatrix}\]
  The Perron-Frobenius eigenvalue $\lambda$ solves the polynomial equation $\lambda^5 - 2 \lambda^4 - 2\lambda^2 + 1 = 0$, and the associated eigenvector is given by
  \[v^T = \left(\lambda^2, \frac{\lambda^2}{\lambda - 1}, \lambda^3 \cdot \frac{\lambda - 2}{\lambda - 1}, \lambda, \frac{\lambda}{\lambda - 1}, 1 \right).\]
  The optimal sampling probabilities can therefore be described as follows:
  \begin{itemize}
    \item[1.]
      If the vertices $[i - 1]$ and $[(i - 1)']$ have all been matched, then
      \[\pi_i = \begin{cases}
        i &\mbox{w.p.~} \frac{\lambda - 1}{\lambda^2} \\
        i + 1 &\mbox{w.p.~} \frac{1}{\lambda^2} \\
        i + 2 &\mbox{w.p.~} 1 - \frac{1}{\lambda}
      \end{cases}\]
    \item[2.]
      If the vertices $[i - 1]$ are matched to $\{1', \ldots, (i - 2)', i'\}$, then
      \[\pi_i = \begin{cases}
        i - 1 &\mbox{w.p.~} 1 - \frac{1}{\lambda} \\
        i + 1 &\mbox{w.p.~} \frac{\lambda - 1}{\lambda^2} \\
        i + 2 &\mbox{w.p.~} \frac{1}{\lambda^2}
      \end{cases}\]
    \item[3.]
      If the vertices $[i - 1]$ are matched to $\{1', \ldots, (i - 2)', (i + 1)'\}$, then
      \[\pi_i = \begin{cases}
        i - 1 &\mbox{w.p.~} \frac{\lambda^2}{2\lambda^2 - 1} \\
        i &\mbox{w.p.~} \frac{\lambda(\lambda - 1)}{2 \lambda^2 - 1} \\
        i + 2 &\mbox{w.p.~} \frac{\lambda - 1}{2\lambda^2 - 1}
      \end{cases}\]
  \end{itemize}
\end{example}
Both of these examples match the optimal sampling probabilities found by Diaconis and Kolesnik~\cite{dk19}.

\section{Analysis of Moments}\label{sec:moments}
\noindent This section focuses on the asymptotic behavior of $\E \log \rho(Y)$ and $\Var \log \rho(Y)$ under Algorithm~\ref{alg:sequential}. Although the previous section derives an improvement to the simple sequential algorithm, computing the constants in the exponent of the required sample size is still a worthwile endeavor. This is because the almost-perfect sampling is specific to type-$(s, t)$ graphs and therefore is not applicable to more general bipartite graphs.

By Proposition~\ref{prop:st_sampling_formula},
\begin{align}
  \log \rho(Y) &= (n - \theta(Y)) \log (t + 1) - \log \abs{A_{n, x^*}} + O_{s, t}(1) \nonumber \\
  \E \log \rho(Y) &= (n - \E \theta(Y)) \log(t + 1) - \log \abs{A_{n, x^*}} + O_{s, t}(1) \label{eq:rho_expectation} \\
  \Var \log \rho(Y) &= \log^2(t + 1) \Var \theta(Y) + O(\sqrt{\Var \theta(Y)}). \label{eq:rho_variance}
\end{align}
It suffices to analyze the asymptotics of $\theta(Y)$, since it is the only source of randomness in $\log \rho(Y)$. In particular, it will be shown that both $\frac{1}{n} \E \log \rho(Y)$ and $\frac{1}{n} \Var \log \rho(Y)$ converge to constants $e_{s, t}$ and $v_{s, t}$ which depend on $s$ and $t$. Further, analysis of the generating function
\begin{equation}\label{eq:st_generating_function}
  G_{s, t}(y, z) = \sum_{n = 0}^\infty z^n \sum_{\pi \in \mc{M}_{n, s, t}} y^{\theta(\pi)}
\end{equation}
yields the exact values for $e_{s, t}$ and $v_{s, t}$, for the pairs $(s, t)$ given in Table~\ref{tab:rho_asymptotics}.

The first step is to show that both $\E \log \rho(Y)$ and $\Var \log \rho(Y)$ grow linearly in $n$. To this end, recall from the proof of Proposition~\ref{prop:CLT_conditions} that the diameter $d$ of the state space is bounded above by $2t$. Thus, there exists $\epsilon = \epsilon(s, t) > 0$ such that the matrices
\begin{equation}\label{eq:diameter}
  \tilde{K}_i = \prod_{j = i}^{i + 2t} K_i
\end{equation}
have entries in $[\epsilon(s, t), 1 - \epsilon(s, t)]$.
\[\epsilon(s, t) \cdot \frac{n}{2t} \le \E \theta(Y) \le (1 - \epsilon(s, t)) \cdot \frac{2t - 1}{2t} n,\]
showing that $\E \log \theta(Y) = \Theta(n)$.

Next, note that an $O(n)$ lower bound for the variance is established by~\eqref{eq:markov_variance_lower_bound}. It remains to derive an upper bound for $\Var \theta(Y)$. To this end, let $X = (X_1, \ldots, X_n)$ be a uniformly random element of $\mc{T}_{n, s, t}$. For each $0 \le i \le n - 1$, generate a sequence $X^i_{i + 1}, \ldots, X^i_n$ from the conditional distribution of $(X_{i + 1}, \ldots, X_n)$ given $(X_1, \ldots, X_i)$, but conditionally independent of $(X_{i + 1}, \ldots, X_n)$. This is done by resampling the chain using Algorithm~\ref{alg:sequential_sequence} starting from index $i$.

Let $\tau_i := \min\{j \ge i + 1: X_j = X_j^i\}$, with $\tau_i := n + 1$ if $X_j \ne X_j^i$ for all $i + 1 \le j \le n$. Define $Y_j^i := X_j^i$ for $i + 1 \le j < \tau_i$ and $Y_j^i = X_j$ for $j \ge \tau_i$. The following lemma asserts that the conditional distributions of $\{X_j^i\}$ and $\{Y_j^i\}$ are the same.
\begin{lemma}\label{lem:conditional_dist}
  For $0 \le i \le n - 2$, the conditional distributions of $(Y_{i + 2}^i, \ldots, Y_n^i)$ and $(X_{i + 2}^i, \ldots, X_n^i)$ given $(X_1, \ldots, X_{i + 1}, X_{i + 1}^i)$ are the same. Also, for each $0 \le i \le n - 1$, $Y_{i + 1}^i = X_{i + 1}^i$.
\end{lemma}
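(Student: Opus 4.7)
The plan is a coupling / strong Markov property argument. The simpler assertion $Y^i_{i+1} = X^i_{i+1}$ is immediate from the definitions: either $\tau_i > i+1$, in which case $Y^i_{i+1} = X^i_{i+1}$ by definition, or $\tau_i = i+1$, in which case $X_{i+1} = X^i_{i+1}$ and so $Y^i_{i+1} = X_{i+1} = X^i_{i+1}$ as well.

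For the main claim, I would view the pair $(X_j, X^i_j)_{j \ge i+1}$ as a joint Markov chain on $\mc{X}_{s,t} \times \mc{X}_{s,t}$, whose two coordinates evolve independently under the common time-inhomogeneous kernels $\{K_j\}$ from Proposition~\ref{prop:st_markov}. Throughout I work under the conditioning $(X_1,\ldots,X_{i+1},X^i_{i+1})$, which just fixes the initial state $(X_{i+1},X^i_{i+1})$ of this joint chain. Observe that $\tau_i = \min\{j \ge i+1 : X_j = X^i_j\}$ is a stopping time with respect to the natural filtration $\mc{F}_j = \sigma(X_{i+1},\ldots,X_j, X^i_{i+1},\ldots,X^i_j)$, and on the event $\{\tau_i \le n\}$ we have $X_{\tau_i} = X^i_{\tau_i}$ by construction.

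The core step is to apply the strong Markov property at $\tau_i$: conditional on $\mc{F}_{\tau_i}$, the two tails $(X_j)_{j > \tau_i}$ and $(X^i_j)_{j > \tau_i}$ are conditionally independent Markov chains, each starting from the common state $X_{\tau_i} = X^i_{\tau_i}$ and each evolving under the same kernels $K_{\tau_i},K_{\tau_i+1},\ldots$; in particular they have identical conditional laws. Swapping one tail for the other therefore preserves the joint distribution. But the swap that replaces $(X^i_{\tau_i+1},\ldots,X^i_n)$ by $(X_{\tau_i+1},\ldots,X_n)$ inside the sequence $(X^i_{i+1},\ldots,X^i_n)$ yields precisely $(Y^i_{i+1},\ldots,Y^i_n)$, by the definition of $Y^i$. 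On the event $\{\tau_i = n+1\}$ no swap occurs and $Y^i \equiv X^i$ already. It follows that $(Y^i_{i+2},\ldots,Y^i_n)$ and $(X^i_{i+2},\ldots,X^i_n)$ have the same conditional law given $(X_1,\ldots,X_{i+1},X^i_{i+1})$.

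The main obstacle is cleanly stating the joint-chain setup and invoking the time-inhomogeneous strong Markov property in the right form. Since each $K_j$ is a finite-state kernel and the horizon is finite, a safe fallback is to replace the strong Markov step by a direct induction on $j$: show that, conditional on $(Y^i_{i+1},\ldots,Y^i_j)$ (and the conditioning data), one has $Y^i_{j+1} \sim K_j(Y^i_j,\cdot)$. The two cases $j < \tau_i$ (so $Y^i_j = X^i_j$ and $Y^i_{j+1} = X^i_{j+1} \sim K_j(X^i_j,\cdot)$) and $j \ge \tau_i$ (so $Y^i_j = X_j$ and $Y^i_{j+1} = X_{j+1} \sim K_j(X_j,\cdot)$) both produce the same transition kernel evaluated at $Y^i_j$, matching the dynamics of $X^i$ started from $X^i_{i+1}$. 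This is the step that has to be written most carefully, because one must argue that the additional randomness determining whether $j < \tau_i$ does not bias the transition kernel; the independence of $X$ and $X^i$ given the conditioning is what makes this work.
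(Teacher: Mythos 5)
Your proof is correct, and it reaches the lemma by a genuinely different, more conceptual route than the paper. The paper's proof is a bare-hands path computation: it fixes a target path $(x_{i+2},\ldots,x_n)$, decomposes the event over the value of the meeting time $\tau_i=j$, writes each piece as $\Pb(A_j\cap B_j\mid E)$ with $A_j$ an event about $X^i$ and $B_j$ an event about $X$, factors this as $P\,Q_j$ using the conditional independence of the two chains and the one-step Markov property, and finishes by noting that $P=\Pb(X^i_{i+2}=x_{i+2},\ldots,X^i_n=x_n\mid E)$ while the $Q_j$ are conditional probabilities of disjoint events exhausting the sample space, so $\sum_j Q_j=1$. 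Your argument packages exactly this computation into the strong Markov property of the product chain $(X_j,X^i_j)_{j\ge i+1}$ at the stopping time $\tau_i$, together with the exchangeability of the two conditionally independent, identically started tails from the common state $X_{\tau_i}=X^i_{\tau_i}$; the tail swap that turns $X^i$ into $Y^i$ then preserves the law, with the event $\{\tau_i=n+1\}$ handled trivially. Your fallback is also sound and is arguably the cleanest write-up: conditioning on the full two-chain filtration $\mc{F}_j$ (with respect to which both $\{\tau_i\le j\}$ and $Y^i_j$ are measurable), one checks $\Pb(Y^i_{j+1}=y\mid\mc{F}_j)=K_j(Y^i_j,y)$ in both cases $\tau_i>j$ (where $Y^i_{j+1}=X^i_{j+1}$ even if $\tau_i=j+1$, since then $X_{j+1}=X^i_{j+1}$) and $\tau_i\le j$, and the tower property removes the extra conditioning --- which is precisely how the concern you flag about the randomness in $\tau_i$ is resolved. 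What the paper's computation buys is self-containedness (no appeal to a strong Markov property for time-inhomogeneous chains); what yours buys is brevity and making the standard Doeblin-coupling structure visible. Your one-line argument for $Y^i_{i+1}=X^i_{i+1}$ coincides with the paper's.
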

Now, starting from $X$, define the random vector $Y$ by first choosing an index $I$ uniformly at random from $\{0, \ldots, n - 1\}$, and then defining
\[Y_j = \begin{cases}
  X_j &\mbox{if~} j \le i \\
  Y_j^i &\mbox{otherwise}
\end{cases}\]
Lemma~\ref{lem:conditional_dist} then asserts that $X$ and $Y$ have the same distribution. Furthermore, the martingale decomposition of variance can be used to achieve the following bound on $f(X)$ for any initial state $x$ and any function $f: \mc{T}_{n, s, t}(x) \rightarrow \R$.
\begin{lemma}\label{lem:var_bound}
  For any $f: \mc{T}_{n, s, t}(x) \rightarrow \R$,
  \[\Var(f(X)) \le \frac{n}{2} \E(f(X) - f(Y))^2.\]
\end{lemma}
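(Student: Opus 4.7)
My plan is to combine the Doob martingale decomposition of $\Var(f(X))$ with the coupling structure provided by the chains $\{Y^i\}$ and by Lemma~\ref{lem:conditional_dist}. Let $\mc{F}_i = \sigma(X_1, \ldots, X_i)$ and $Z_i = \E[f(X) \mid \mc{F}_i]$, so that $Z_0 = \E f(X)$, $Z_n = f(X)$, and the martingale increments are orthogonal. Then
\[
  \Var(f(X)) = \sum_{i=1}^n \E[(Z_i - Z_{i-1})^2] = \sum_{i=1}^n \E\bigl[\Var(Z_i \mid \mc{F}_{i-1})\bigr].
\]
Observe that $Z_i$ is a deterministic function $g_i(X_1, \ldots, X_i)$ (the conditional expectation), where $g_i$ encodes the distribution of a Markov-chain continuation from state $X_i$.

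\textbf{Symmetrization via the resample.} For each $i$, define $W_i := g_i(X_1, \ldots, X_{i-1}, X^{i-1}_i)$; that is, $W_i$ is the same functional applied to the resampled coordinate $X^{i-1}_i = Y^{i-1}_i$. Conditional on $\mc{F}_{i-1}$, Lemma~\ref{lem:conditional_dist} (together with the fact that $X^{i-1}_i$ is an independent resample of $X_i$ from its conditional law given $\mc{F}_{i-1}$) implies that $Z_i$ and $W_i$ are conditionally i.i.d. The standard identity for i.i.d.\ copies then gives
\[
  \Var(Z_i \mid \mc{F}_{i-1}) = \tfrac{1}{2}\,\E\bigl[(Z_i - W_i)^2 \mid \mc{F}_{i-1}\bigr].
\]

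\textbf{Jensen step.} By the tower property and Lemma~\ref{lem:conditional_dist},
\[
  Z_i - W_i = \E\bigl[f(X) - f(Y^{i-1}) \,\big|\, \mc{F}_{i-1}, X_i, X^{i-1}_i\bigr],
\]
so conditional Jensen yields
\[
  (Z_i - W_i)^2 \le \E\bigl[(f(X) - f(Y^{i-1}))^2 \,\big|\, \mc{F}_{i-1}, X_i, X^{i-1}_i\bigr].
\]
Taking expectations and summing over $i$ gives
\[
  \Var(f(X)) \le \tfrac{1}{2}\sum_{i=1}^n \E\bigl[(f(X) - f(Y^{i-1}))^2\bigr].
\]
Finally, since $Y = Y^I$ with $I$ uniform on $\{0, 1, \ldots, n-1\}$, one has $\E[(f(X) - f(Y))^2] = \tfrac{1}{n}\sum_{i=1}^n \E[(f(X) - f(Y^{i-1}))^2]$, and the desired bound follows.

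\textbf{Main obstacle.} The delicate point is justifying that $Z_i$ and $W_i$ are conditionally i.i.d.\ given $\mc{F}_{i-1}$, which is what delivers the crucial factor of $\tfrac{1}{2}$ rather than $1$. This is not immediate from the coupling because $Y^{i-1}$ remains linked to $X$ through the coalescence time $\tau_{i-1}$; however, Lemma~\ref{lem:conditional_dist} precisely guarantees that after averaging over the coupled tail $(Y^{i-1}_{i+1}, \ldots, Y^{i-1}_n)$, the conditional distribution depends only on the first resampled state $X^{i-1}_i$ in the same way that $Z_i$ depends on $X_i$. Once this symmetry is invoked correctly, the rest of the argument is a clean Efron--Stein style bound.
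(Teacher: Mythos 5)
Your proposal is correct and follows essentially the same route as the paper's proof: the Doob martingale decomposition, the factor-of-$\tfrac12$ symmetrization using that $X_{i+1}$ and $X^i_{i+1}$ are conditionally i.i.d.\ given $X_1,\ldots,X_i$, the identification of the resampled conditional expectation with $\E[f(Y^i)\mid\cdots]$ via Lemma~\ref{lem:conditional_dist}, conditional Jensen, and finally averaging over the uniform index $I$. The subtle point you flag --- that the coupling's coalescence structure is exactly what Lemma~\ref{lem:conditional_dist} resolves --- is the same pivot the paper's argument turns on.
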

Taking $f(X) = \theta(X)$ means that $f(X) - f(Y)$ is bounded above by $\tau_I - I$, where $I$ is the random index used to construct $Y$. To derive the asymptotic upper bound on $\Var f(X)$, it therefore suffices to show that $\E \tau_I^2 = O(1)$.

From~\eqref{eq:diameter}, the $d$-step transitions $\prod_{j = 0}^d K_{i + j}$ have entries that are all bounded below by $\epsilon = \epsilon(s, t) > 0$. For each $k$, let
\[\tilde{\tau}_k = \min\{j > 0: X_{k + jd} = X^k_{k + jd}\}.\]
Further, let $Z_\epsilon$ be a geometric random variable with parameter $\epsilon$. Then, $\tau_k - k \le d \cdot \tilde{\tau}_k$ almost surely, and $\E \tilde{\tau}_k^2 \le \E Z_\epsilon^2$. Thus,
\[\E \tau_k^2 \le \frac{d^2}{\epsilon^2} \le \frac{4t^2}{\epsilon^2}\]
Putting this together with Lemma~\ref{lem:var_bound} gives $\Var \theta(Y) \le \frac{2t^2}{\epsilon^2} n$.
\subsection{Generating functions}\label{sec:generating_functions}
Thus far, it has been shown that both $\E \theta(Y)$ and $\Var \theta(Y)$ are both of order $n$. This section explicitly computes the asymptotic behavior of $\E \theta(Y)$ and $\Var \theta(Y)$ using generating functions. This method was first analyzed in~\cite{cdg18} and later refined in~\cite{dk19} to be applicable to type-$(1, 1)$, type-$(2, 1)$, and type-$(2, 2)$ permutations. This section further generalizes the method to arbitrary pairs $(s, t)$.

For each $x \in \mc{X}_{s, t}$, let $B_{n, x, y}$ be the matrix with entries in $\{0, 1, y\}$ satisfying the following conditions:
\begin{itemize}
  \item
    If $A_{n, x}(i, j) = 0$, then $B_{n, x, y}(i, j) = 0$.
  \item
    If $A_{n, x}(i, j) = 1$ and $A_{n, x}(i - 1, j) = A_{n, x}(i, j - 1) = 0$, then $B_{n, x, y}(i, j) = y$. Here, $A_{n, x}(i, j)$ is assumed to be 0 if either $i$ or $j$ is negative.
  \item
    For all other pairs $(i, j)$, $B_{n, x, y}(i, j) = 1$.
\end{itemize}
For example, for $n = 8, s = 3, t = 2$, and $x = (-2, 0)$,
\[B_{n, x, y} = \begin{pmatrix}
  1 & 1 & 1 & 0 & 0 & 0 & 0 & 0 \\
  y & 1 & 1 & 1 & 0 & 0 & 0 & 0 \\
  0 & 1 & 1 & 1 & 1 & 0 & 0 & 0 \\
  0 & y & 1 & 1 & 1 & 1 & 0 & 0 \\
  0 & 0 & 1 & 1 & 1 & 1 & 1 & 0 \\
  0 & 0 & y & 1 & 1 & 1 & 1 & 1 \\
  0 & 0 & 0 & y & 1 & 1 & 1 & 1 \\
  0 & 0 & 0 & 0 & y & 1 & 1 & 1
\end{pmatrix}.\]
Notice that $B_{n, x, 1} = A_{n, x}$, and that
\[\abs{B_{n, x^*, y}} = \sum_{\pi \in \mc{M}_{n, s, t}} y^{\theta(\pi)}.\]
Next, define the generating function
\begin{equation}\label{eq:theta_generating_function}
  G_{s, t}(y, z) = \sum_{n = 0}^\infty z^n \sum_{\pi \in \mc{M}_{n, s, t}} y^{\theta(\pi)} = \sum_{n = 0}^{\infty} z^n \abs{B_{n, x^*, y}}.
\end{equation}
Under the uniform distribution, the $k^{\text{th}}$ falling moment of $\theta(\pi)$ is given by
\[\abs{\mc{M}_{n, s, t}} \E\left[ \theta(\pi) (\theta(\pi) - 1) \cdots (\theta(\pi) - k + 1) \right] = [z^n]\left( \frac{\partial^k}{\partial y^k} G_{s, t}(y, z) \biggr|_{y = 1} \right),\]
where $[z^n] f(z)$ denotes the coefficient of $z^n$ in the power series expansion of $f$.

When $f$ is expressible as a rational function $\frac{P(z)}{Q(z)}$, where $Q(z)$ has roots $r_1, r_2, \ldots, r_m$, then the partial fraction decomposition of $f$ is given by
\[\frac{P(z)}{Q(z)} = \sum_{i = 1}^m \left( \frac{a_{i1}}{1 - \frac{z}{r_i}} + \frac{a_{i2}}{\left( 1 - \frac{z}{r_i} \right)^2} + \cdots + \frac{a_{i k_i}}{\left( 1 - \frac{z}{r_i} \right)^{k_i}} \right),\]
The coefficients $a_{ij}$ are computable using the residue method and are given by
\begin{equation}\label{eq:residue}
  a_{ij} = \frac{1}{(-r_i)^j (k_i - j)!} \lim_{z \rightarrow r_i} \frac{d^{k_i - j}}{d z^{k_i - j}} \left( (z - r_i)^{k_i} \frac{P(z)}{Q(z)} \right).
\end{equation}
When $n$ is large, the main contributions to the coefficient of $z^n$ in $f(z)$ are from the terms corresponding to the root with the smallest magnitude.

The following examples explicitly compute the generating functions for various pairs $(s, t)$.
\begin{example}[$t = 1$]\label{ex:s1_generating_func}
  When $t = 1$, the state space $\mc{X}_{s, t}$ is the set of integers $\{-s, -s + 1, \ldots, 0\}$, and the subset $\mc{X}'_{s, t}$ is the singleton state $\{-s\}$. The transitions between states are given by $T_1(x) = 0 = x^*$ and $T_2(x) = x - 1$. The permanents $\abs{B_{n, x, y}}$ satisfy the recursion
  \begin{equation}
    \abs{B_{n, x, y}} = \begin{cases}
      \abs{B_{n - 1, T_1(x), y}} + \abs{B_{n - 1, T_2(x), y}} &\mbox{if~} x \not\in \mc{X}'_{s, t} \\
      \abs{B_{n - 1, T_1(x), y}}y &\mbox{otherwise}
    \end{cases}
  \end{equation}
  Further expansion yields
  \begin{align}
    \abs{B_{n, x^*, y}} = \begin{cases}
      \sum_{i = 1}^s \abs{B_{n - i, x^*, y}} + \abs{B_{n - s - 1, x^*, y}}y &\mbox{if~} n \ge s + 1 \\
      \sum_{i = 1}^n 2^{i - 1} y^i &\mbox{otherwise}
    \end{cases}
  \end{align}
  The generating function $G_{s, 1}(y, z)$ can then be written as the rational function
  \begin{equation}\label{eq:s1_generating_function}
    G_{s, 1}(y, z) = \frac{z + z^2 + \cdots + z^s + yz^{s + 1}}{1 - z - z^2 - \cdots - z^s - yz^{s + 1}},
  \end{equation}
  with derivatives
  \begin{align}
    \left( \frac{d}{dy} G_{s, 1}(y, z) \right)_{y = 1} &= \frac{z^{s + 1}}{(1 - z - z^2 - \cdots - z^{s + 1})^2} \\
    \left( \frac{d^2}{dy^2} G_{s, 1}(y, z) \right)_{y = 1} &= \frac{2z^{2s + 2}}{(1 - z - z^2 - \cdots - z^{s + 1})^3}
  \end{align}
  Observe that the polynomial $1 - z - z^2 - \cdots - z^{s + 1}$ has one simple root $r$ on the positive real line, and that $r$ is the root of lowest magnitude. It therefore follows that there exist constants $c_1, \ldots, c_7$ such that for large $n$,
  \begin{align*}
    \abs{M_{n, s, 1}} &= \frac{c_1}{r^n} + o(1) \\
    \abs{M_{n, s, 1}} \E \theta(\pi) &= \frac{c_2 + c_3 n}{r^n} + o(1) \\
    \abs{M_{n, s, 1}} \E(\theta(\pi)(\theta(\pi) - 1)) &= \frac{c_4 + c_5 n + c_6 n^2}{r^n} + o(1).
  \end{align*}
  This means that the expectation and variance of $\theta(\pi)$ are
  \begin{align}
    \E \theta(\pi) &= \frac{c_2 + c_3 n}{c_1} \label{eq:theta_mean} + o(n) \\
    \Var \theta(\pi) &= \frac{c_2 + c_4 + (c_3 + c_5)n + c_6 n^2}{c_1} - \left( \frac{c_2 + c_3 n}{c_1} \right)^2 + o(n). \label{eq:theta_var}
  \end{align}
  By Lemma~\ref{lem:var_bound}, the coefficient of $n^2$ in~\eqref{eq:theta_var} is necessarily zero, and indeed, numerical computations confirm this fact. The full results of this computation are listed in Table~\ref{tab:theta_asymptotics} for various values of $s$.
\end{example}

\begin{example}[$s = 3, t = 2$]\label{ex:32_generating_func}
  The state space is
  \[\mc{X}_{s, t} = \{(0, 1), (-1, 1), (-1, 0), (-2, 1), (-2, 0), (-2, -1), (-3, 1), (-3, 0), (-3, -1), (-3, -2)\},\]
  with $\mc{X}'_{s, t}$ being comprised of the last four states. In addition to the state $x^* = (0, 1)$, let $x' = (-1, 1)$. After simplification, the quantities $\abs{B_{n, x, y}}$ can be shown to satisfy the recursive relations
  \begin{align}
    \abs{B_{n, x^*, y}} &= \abs{B_{n - 1, x^*, y}} + \abs{B_{n - 3, x^*, y}} + (1 + 2y) \abs{B_{n - 4, x^*, y}} + (y + y^2) \abs{B_{n - 5, x^*, y}} \nonumber \\
    &\qquad+ \abs{B_{n - 1, x', y}} + \abs{B_{n - 2, x', y}} + y \abs{B_{n - 4, x', y}} + y \abs{B_{n - 5, x', y}} \label{eq:32_B1} \\
    \abs{B_{n, x', y}} &= \abs{B_{n - 1, x^*, y}} + \abs{B_{n - 2, x^*, y}} + 2y \abs{B_{n - 3, x^*, y}} + y \abs{B_{n - 4, x^*, y}} + y^2 \abs{B_{n - 5, x^*, y}} \nonumber \\
    &\qquad+ \abs{B_{n - 2, x', y}} + y \abs{B_{n - 3, x', y}} + y^2 \abs{B_{n - 5, x', y}}. \label{eq:32_B2}
  \end{align}
  Next, define the auxiliary generating function
  \begin{equation}\label{eq:aux_generating_function}
    H_{3, 2}(y, z) = \sum_{n = 0}^\infty z^n \abs{B_{n, x', y}}.
  \end{equation}
  With $G^{(i)}_{3, 2}$ and $H^{(i)}_{3, 2}$ denoting the partial sums of $G_{3, 2}$ and $H_{3, 2}$,~\eqref{eq:32_B1} and~\eqref{eq:32_B2} imply that
  \begin{align}
    f_1(y, z) &= g_1(y, z) G_{3, 2}(y, z) + h_1(y, z) H_{3, 2}(y, z) \label{eq:32_generating1} \\
    f_2(y, z) &= g_2(y, z) G_{3, 2}(y, z) + h_2(y, z) H_{3, 2}(y, z), \label{eq:32_generating2}
  \end{align}
  where
  \begin{align*}
    f_1(y, z) &= z G^{(1)}_{3, 2} + z^3 G^{(3)}_{3, 2} + z^4 (1 + 2y) G^{(4)}_{3, 2} + z^5(y + y^2) G^{(5)}_{3, 2} + z H^{(1)}_{3, 2} + z^2 H^{(2)}_{3, 2} + z^4 y H^{(4)}_{3, 2} + z^5 y H^{(5)}_{3, 2} \\
    g_1(y, z) &= z^5(y + y^2) + z^4(1 + 2y) + z^3 + z - 1 \\
    h_1(y, z) &= z + z^2 + z^4 y + z^5 y \\
    f_2(y, z) &= z G^{(1)}_{3, 2} + z^2 G^{(2)}_{3, 2} + 2z^3 y G^{(3)}_{3, 2} + z^4 y G^{(4)}_{3, 2} + z^5 y^2 G^{(5)}_{3, 2} + z^2 H^{(2)}_{3, 2} + z^3 y H^{(3)}_{3, 2} + z^5 y^2 H^{(5)}_{3, 2} \\
    g_2(y, z) &= z + z^2 + 2z^3 y + z^4 y + z^5 y^2 \\
    h_2(y, z) &= z^5 y^2 + z^3 y + z^2 - 1,
  \end{align*}
  and the partial sums $G^{(i)}_{3, 2}$ and $H^{(i)}_{3, 2}$ are given by
  \begin{align*}
    G^{(1)}_{3, 2} &= z \\
    G^{(2)}_{3, 2} &= z + 2z^2 \\
    G^{(3)}_{3, 2} &= z + 2z^2 + 6z^3 \\
    G^{(4)}_{3, 2} &= z + 2z^2 + 6z^3 + (4y + 12)z^4 \\
    G^{(5)}_{3, 2} &= z + 2z^2 + 6z^3 + (4y + 12)z^4 + (y^2 + 14y + 27) z^5 \\
    H^{(1)}_{3, 2} &= z \\
    H^{(2)}_{3, 2} &= z + 2z^2 \\
    H^{(3)}_{3, 2} &= z + 2z^2 + (2y + 4)z^3 \\
    H^{(4)}_{3, 2} &= z + 2z^2 + (2y + 4)z^3 + (4y + 10)z^4 \\
    H^{(5)}_{3, 2} &= z + 2z^2 + (2y + 4)z^3 + (4y + 10)z^4 + (y^2 + 13y + 24)z^5
  \end{align*}
  The solution to~\eqref{eq:32_generating1} and~\eqref{eq:32_generating2} is then given by
  \begin{equation}\label{eq:32_generating_solution}
    \begin{pmatrix} G \\ H
    \end{pmatrix} = \frac{1}{g_1 h_2 - h_1 g_2} \begin{pmatrix}
      h_2 & -h_1 \\ -g_2 & g_1
    \end{pmatrix} \cdot \begin{pmatrix} f_1 \\ f_2
    \end{pmatrix}
  \end{equation}
  As with Example~\ref{ex:s1_generating_func}, the asymptotic behavior for the expectation and variance of $\theta(\pi)$ is computed and displayed in Table~\ref{tab:theta_asymptotics}, albeit with aid from Maple.
\end{example}

\begin{table}[h]
  \centering
  \begin{tabular}{c|c|c|c}
    $(s, t)$ & $\frac{\E \theta(\pi)}{n}$ & $\frac{\Var \theta(\pi)}{n}$ \\ \hline
    $(2, 1)$ & 0.09939 & 0.05950 \\
    $(3, 1)$ & 0.04102 & 0.03138 \\
    $(4, 1)$ & 0.01832 & 0.01580 \\
    $(5, 1)$ & 0.00857 & 0.00788 \\
    $(6, 1)$ & 0.00412 & 0.00393 \\
    $(7, 1)$ & 0.00201 & 0.00196 \\
    $(3, 2)$ & 0.09077 & 0.06061
  \end{tabular}
  \caption{Asymptotics for $\theta(\pi)$}
  \label{tab:theta_asymptotics}
\end{table}
Consequently, the mean and variance of $\log \rho(Y)$, given by~\eqref{eq:rho_expectation} and~\eqref{eq:rho_variance}, are given in Table~\ref{tab:rho_asymptotics}.
\begin{table}[h]
  \centering
  \begin{tabular}{c|c|c|c}
    $(s, t)$ & $\frac{\E \log \rho(Y)}{n}$ & $\frac{\Var \log \rho(Y)}{n}$ \\ \hline
    $(2, 1)$ & 0.01488 & 0.02859 \\
    $(3, 1)$ & 0.00846 & 0.01508 \\
    $(4, 1)$ & 0.00448 & 0.00759 \\
    $(5, 1)$ & 0.00230 & 0.00379 \\
    $(6, 1)$ & 0.00117 & 0.00189 \\
    $(7, 1)$ & 0.00059 & 0.00094 \\
    $(3, 2)$ & 0.04041 & 0.07315
  \end{tabular}
  \caption{Asymptotics for $\log \rho(Y)$}
  \label{tab:rho_asymptotics}
\end{table}

By the result of Chatterjee and Diaconis~\cite{cd18}, the necessary and sufficient sample size for importance sampling is roughly $\exp(\E \log \rho(Y) + \sqrt{\Var(\log \rho(Y))})$. Combining this with the $O(n)$ time required for Algorithm~\ref{alg:sequential} to produce a sample, yields an aggregate runtime of
\begin{equation}\label{eq:sequential_runtime}
  O(n \exp(\E \log \rho(Y) + \sqrt{\Var \log \rho(Y)}))
\end{equation}
for approximating the uniform distribution. Table~\ref{tab:sample_size_asymptotics} reports this sample size for various pairs $(s, t)$ and graph sizes $n$.

\begin{table}
  \centering
  \begin{tabular}{c|c|c|c|c}
    $(s, t)$ & $n = 100$ & $n = 200$ & $n = 500$ & $n = 1000$ \\ \hline
    $(2, 1)$ & 25 & 215 & 74,607 & 607,766,194 \\
    $(3, 1)$ & 8 & 31 & 1068 & 228,228 \\
    $(4, 1)$ & 4 & 9 & 66 & 1382 \\
    $(5, 1)$ & 3 & 4 & 13 & 71 \\
    $(6, 1)$ & 2 & 3 & 5 & 13 \\
    $(7, 1)$ & 2 & 2 & 3 & 5 \\
    $(3, 2)$ & 851 & 148,372 & $2.52 \cdot 10^{11}$ & $1.84 \cdot 10^{21}$
  \end{tabular}
  \caption{Sample size requirements for convergence of importance sampling}
  \label{tab:sample_size_asymptotics}
\end{table}

It is worth pausing here to compare the results of the sequential importance sampling algorithm with the best bounds for the switch chain algorithm. After all, the importance sampling algorithm is asymptotically exponential, while the switch chain has been shown to mix in polynomial time. However, as the constants in Table~\ref{tab:rho_asymptotics} are quite small, it turns out that for small to moderate values of $n$, the importance sampling algorithm outperforms its Markov chain counterpart.

While the central limit theorem for $\log \rho(Y)$ holds for arbitrary pairs $(s, t)$, computing the constants as in Table~\ref{tab:rho_asymptotics} using generating functions is a challenging task for large pairs. For example, when $s = t = 6$, the generating function $G_{6, 6}(1, z)$ is a rational function $\frac{P(z)}{Q(z)}$, where $\deg(P(z)) = 482$ and $\deg(Q(z)) = 494$ (see oeis.org, entry a002524).

\section{Conclusions and future work}\label{sec:future}
\noindent The results presented in this paper demonstrate that importance sampling is an attractive alternative to the MCMC algorithms in the computer science literature for sampling matchings from type-$(s, t)$ graphs. While current techniques are promising for small $s, t$, they quickly become algebraically and computationally intensive for more complex cases. One future direction of work is to find a more tractable way of computing the asymptotic moments of $\log \rho(Y)$.

While importance sampling is practical for type-$(s, t)$ graphs, little is known about its performance for other classes of bipartite graphs. In particular, switch Markov chain proposed by Diaconis et.\ al.~\cite{dgh01} is applicable for the larger class of monotone graphs. For those graphs, Algorithm~\ref{alg:sequential} is less efficient, as checking whether or not a partial matching can be completed to a perfect matching is a more involved process. It will be interesting to see if an efficient importance sampling algorithm exists for monotone graphs, and if the techniques in this paper apply in the more general setting.

The almost-perfect sampling probabilities derived in this chapter rely on the bijection between matchings of type-$(s, t)$ graphs and Markovian sequences. Such a mapping does not necessarily exist for more general bipartite graphs. In the general case, it has been shown empirically that Sinkhorn balancing the adjacency matrix of the graph can yield sampling probabilities that outperform the uniform sampling in Algorithm~\ref{alg:sequential}. In particular, Beichl and Sullivan~\cite{bs99} demonstrate this in the context of counting dimer coverings of a lattice. Quantifying this improvement for different classes of bipartite graphs is a worthwile research problem. Curiously, while Sinkhorn balancing appears to improve the performance of the sequential algorithm on type-$(s, t)$ graphs, it does \emph{not} give the optimal sampling probabilities derived in Section~\ref{sec:markov}.

\section{Appendix}\label{sec:proofs}
\subsection{Proof of Proposition~\ref{prop:alg_success}}
Let $\mc{M}_{i - 1}$ be the partial matching of the vertices $1, 2, \ldots, i - 1$. The vertex $(i - s)'$ is connected to the vertices $(i - s - t)_+, \ldots, i$, and so if $(i - s)'$ is not matched to any of the vertices $1, 2, \ldots, i - 1$, then any perfect matching of $G$ containing $\mc{M}_{i - 1}$ must match $i$ with $(i - s)'$. This means that $J_i \subseteq \{(i - s)'\}$.

Conversely, if after step $i$, the unmatched vertices in $[n']$ are $\{v_1' < \cdots < v_{n - i}'\}$, where $v_1 > i - s$, then the perfect matching is completable by matching $i + k$ to $v_k'$ for all $1 \le k \le n - i$.
\subsection{Proof of Proposition~\ref{prop:st_sampling_formula}}
Let $\mc{M}_{i - 1}$ be the partial matching of the vertices $1, 2, \ldots, i - 1$. The vertex $(i - s)'$ is connected to the vertices $(i - s - t)_+, \ldots, i$, and so if $(i - s)'$ is not matched to any of the vertices $1, 2, \ldots, i - 1$, then any perfect matching of $G$ containing $\mc{M}_{i - 1}$ must match $i$ with $(i - s)'$. This means that $J_i \subseteq \{(i - s)'\}$.

Conversely, if after step $i$, the unmatched vertices in $[n']$ are $\{v_1' < \cdots < v_{n - i}'\}$, where $v_1 > i - s$, then the perfect matching is completable by matching $i + k$ to $v_k'$ for all $1 \le k \le n - i$.
\subsection{Proof of Proposition~\ref{prop:sequence_matching_bijection}}
First, note that for any $x \in \mc{X}_{s, t}$, the matrix $A_{k - 1, T_j(x)}$ is the result of removing the first row and $j^{\text{th}}$ column from $A_{k, x}$.

Write $\pi = (\pi_1, \pi_2, \ldots, \pi_n)$. Then, $A_{n, x^*}(i, \pi_i) = 1$ for all $1 \le i \le n$. Define $x_1, \ldots, x_n$ so that for all $i = 1, \ldots, n$, $A_{n - i + 1, x_i}$ is the matrix formed by deleting rows $1, 2, \ldots, i - 1$ and columns $\pi_1, \ldots, \pi_{i - 1}$ from $A_{n, x^*}$. It then is immediately clear that $x_1 = x^*$, and $x_{k + 1} = T_{j_k}(x_k)$ for some $j_1, \ldots, j_{n - 1} \in \{1, 2, \ldots, t + 1\}$. Additionally, there are only $n - i + 1$ columns in $A_{n - i + 1, x_i}$, so $j_i \le n - i + 1$. This shows that $x_1, \ldots, x_n$ is a sequence satisfying conditions a.\ and b.\ of the proposition.

Conversely, given $x_1, \ldots, x_n$ satisfying the two conditions, construct the matching $\pi$ by the following procedure.
\begin{itemize}
  \item[1.]
    Initialize $\pi = \{\}$ and $\sigma = \{1, 2, \ldots, n\}$.
  \item[2.]
    For $i = 1, 2, \ldots, n - 1$, remove the $j_i^{\text{th}}$ smallest element from $\sigma$ and add it to $\pi$. This step is always possible because $j_i \le n - i + 1$.
  \item[3.]
    This leaves one element left in $\sigma$. Add that element to the end of $\pi$.
\end{itemize}
At each step, the $(t + 1)^{\text{st}}$ smallest element in $\sigma$ is at most $i + t$. Thus, since $j_i \le t + 1$, it must be that $\pi_i \le i + t$.

Next, an inductive argument shows that at step $i$, if $x_i = (n_1, \ldots, n_t)$, then the smallest $t$ elements in $\sigma$ are $i - n_1, i + 1 - n_2, \ldots, i + t - 1 - n_t$. Since $n_1 \le s$, this immediately shows that $\pi_i \ge i - s$. Thus, $\pi$ is a perfect matching of the type-$(s, t)$ graph.
\subsection{Proof of Lemma~\ref{lem:num_paths_ratio_convergence}}
The first claim is proved using the Perron-Frobenius theorem. It suffices to show that $H_{s, t}$ is a strongly connected graph with period 1.

That $H_{s, t}$ is aperiodic follows from the fact that it has a self-loop at $x_0$. Next, for any two states $x, y \in \mc{X}_{s, t}$, a directed path exists from $x$ to $y = (n_1, \ldots, n_t)$ through the state $x_0 = (0, 1, 2, \ldots, t)$:
\begin{align*}
  x_0 &= T_1^{t}(x) \\
  y &= T_{t + 1}^{k_{t - 1}} \circ T_1 \circ T_{t + 1}^{k_{t - 2}} \circ T_1 \circ \cdots \circ T_{t + 1}^{k_2} \circ T_1 \circ T_{t + 1}^{k_1} (x_0),
\end{align*}
where $k_i = y_{i + 1} - y_i - 1$. This shows that $H_{s, t}$ is strongly connected.

For the second claim, let $P_n(x, y)$ denote the collection of directed paths of length $n$ that go from $x$ to $y$. Then,
\[\abs{P_n(x, y)} = e_x^T M_{s, t}^n e_y,\]
where $e_x$ and $e_y$ are the coordinate vectors for the states $x$ and $y$, respectively.

For each $x \in \mc{X}_{s, t}$, let $\Gamma_t(x)$ be the collection of paths $x = x_1, \ldots, x_t$ such that for all $i = 1, 2, \ldots, t - 1$, $x_{i + 1} = T_{j_i}(x_i)$ for some $j_i \le t + 1 - i$. Then, by Proposition~\ref{prop:sequence_matching_bijection}, every matching in $\mc{M}_{n, x}$ corresponds to a path $\gamma = (x_1, \ldots, x_n)$ such that $(x_1, \ldots, x_{n - t + 1}) \in P_{n - t}(x, x_{n - t + 1})$ and $(x_{n - t + 1}, \ldots, x_n) \in \Gamma_t(x_{n - t + 1})$. Consequently,
\begin{equation}\label{eq:permanent_paths}
  \abs{A_{n, x}} = \sum_{z \in \mc{X}_{s, t}} \abs{P_{n - t + 1}(x, z)} \cdot \abs{\Gamma_t(z)} = \sum_{z \in \mc{X}_{s, t}} e_x^T M_{s, t}^{n - t + 1} e_z \cdot \abs{\Gamma_t(z)}.
\end{equation}
It therefore suffices to show that
\[\frac{e_x M_{s, t}^n e_z}{e_y M_{s, t}^n e_z} = \frac{v_x}{v_y} \left( 1 + O(e^{-n}) \right)\]
for all states $x, y, z$.

To this end, let $m = \abs{X_{s, t}}$. Suppose that for each $n > 0$, $M_{s, t}^n$ has the singular value decomposition $M_{s, t}^n = U_n \Sigma_n V_n^T$, where the diagonal entries $\sigma_{n, 1}, \ldots, \sigma_{n, m}$ of $\Sigma$ are arranged so that $\abs{\sigma_{n, 1}} \ge \cdots \ge \abs{\sigma_{n, m}}$. Denote the columns of $U$ and $V$ by $u_{n, 1}, \ldots, u_{n, m}$ and $v_{n, 1}, \ldots, v_{n, m}$, respectively. Both sets of vectors form orthonormal bases of $\R^m$ and are related by
\begin{align*}
  M_{s, t}^n v_{n, i} &= \sigma_{n, i} u_{n, i} \\
  u_{n, i} M_{s, t}^n &= \sigma_{n, i} v_{n, i}.
\end{align*}
It is shown in~\cite{yam67} that for all $i$,
\begin{equation}\label{eq:singular_matrix_power}
  \lim_{n \rightarrow \infty} \sigma_{n, i}^{1/n} = \lambda_i,
\end{equation}
where $\abs{\lambda_1} > \abs{\lambda_2} \ge \cdots \ge \abs{\lambda_m}$ are the eigenvalues of $M_{s, t}$. Letting $\delta = \frac{1}{2} \left( 1 - \frac{\abs{\lambda_2}}{\abs{\lambda_1}} \right)$, there must exist $N_\delta > 0$ such that for all $n > N_\delta$ and $i = 2, \ldots, m$, $\frac{\sigma_{n, i}^{1/n}}{\lambda_1} < 1 - \delta$.

For each $1 \le i \le m$, let $c_{n, i} = v \cdot v_{n, i}$, so that
\[v = \sum_{i = 1}^m c_{n, i} v_{n, i}.\]
Multiplying by $M_{s, t}^n$ yields
\begin{align}
  \lambda_1^n v &= M_{s, t}^n v \nonumber \\
  &= \sum_{i = 1}^m c_{n, i} M_{s, t}^n v_{n, i} \nonumber \\
  &= \sum_{i = 1}^m \sigma_{n, i} c_{n, i} u_{n, i}, \label{eq:v_decomp}
\end{align}
Dividing by $\lambda_1^n$ gives
\[v = \frac{\sigma_{n, 1} c_{n, 1}}{\lambda_1^n} u_{n, 1} + \sum_{i = 2}^m \frac{\sigma_{n, i} c_{n, i}}{\lambda_1^n} u_{n, i} = \frac{\sigma_{n, 1} c_{n, 1}}{\lambda_1^n} u_{n, 1} + u'_n,\]
where $u'$ is a vector of norm at most $m(1 - \delta)^n \le m e^{-n \delta}$. Since both $v$ and $u_{n, 1}$ are unit vectors, it follows that for any $z \in \mc{X}_{s, t}$,
\begin{equation}\label{eq:right_eigenvector_convergence}
  \abs{\frac{v \cdot e_z}{u_{n, 1} \cdot e_z} - 1} \le m e^{-n \delta}.
\end{equation}

A similar argument shows that
\begin{equation}\label{eq:left_eigenvector_convergence}
  \abs{\frac{w \cdot e_z}{v_{n, 1} \cdot e_z} - 1} \le m e^{-n \delta},
\end{equation}
where $w$ is the left eigenvector corresponding to $\lambda_1$ and is chosen to have all positive coordinates.

Finally, consider the decompositions
\begin{align*}
  e_x &= \sum_{i = 1}^m a_{n, i} u_{n, i} \\
  e_y &= \sum_{i = 1}^m b_{n, i} u_{n, i},
\end{align*}
where by~\eqref{eq:right_eigenvector_convergence},
\begin{align*}
  a_{n, i} &= e_x \cdot u_{n, i} = v_x \left( 1 + O(e^{-n}) \right), \\
  b_{n, i} &= e_y \cdot u_{n, i} = v_y \left( 1 + O(e^{-n}) \right).
\end{align*}

The $z$ coordinate of $e_x M_{s, t}^n$ then satisfies
\begin{align*}
  \frac{e_x M_{s, t}^n e_z}{\lambda_1^n} &= \frac{1}{\lambda_1^n}  \sum_{i = 1}^m a_{n, i} u_{n, i} M_{s, t}^n e_z  \\
  &= \frac{1}{\lambda_1^n} \sum_{i = 1}^m a_{n, i} \sigma_{n, i} v_{n, i} \cdot e_z \label{eq:z_coord_decomp} \\
  &= \frac{a_{n, 1} \sigma_{n, 1} v_{n, 1} \cdot e_z}{\lambda_1^n} + u'' \\
  &= \frac{a_{n, 1}\sigma_{n, 1} w_z}{\lambda_1^n} + u'',
\end{align*}
where $u''$ is a vector of length at most $m e^{-n\delta}$.
Similarly, $\frac{e_y M_{s, t}^n e_z}{\lambda_1^n} = \frac{b_{n, 1} \sigma_{n, 1} w_z}{\lambda_1^n} + u'''$.
Putting everything together therefore yields
\[\frac{e_x M_{s, t}^n e_z}{e_y M_{s, t}^n e_z} = \frac{e_x \cdot v}{e_y \cdot v} \left( 1 + O(e^{-n}) \right) = \frac{v_x}{v_y} \left( 1 + O(e^{-n}) \right),\]
thus completing the proof.

\noindent The proofs of Lemmas~\ref{lem:conditional_dist} and~\ref{lem:var_bound} are due to Sourav Chatterjee.
\subsection{Proof of Lemma~\ref{lem:conditional_dist}}
For each $0 \le k \le n - 1$ and each $x, y \in \{0, 1, \ldots, t\}$, define
\[m_k(x, y) = \Pb(X_{k + 1} = y \mid X_k = x).\]
Take any $x_1, \ldots, x_n \in \{0, 1, \ldots, t\}$. Define $x_0 = x_{n + 1} = 0$ and $X_{n + 1}^i = 0$. Let $m_n(x, 0) = 1$ for any $x \in \{0, 1, \ldots, t\}$. Let $z_i = 1 - x_i$ for each $i$. For any $x \in \{0, 1, \ldots, t\}$, define the event
\[E = \{X_1 = x_1, \ldots, X_{i + 1} = x_{i + 1}, X_{i + 1}^i = x\}.\]
There are two cases to consider. Furst, suppose that $x = x_{i + 1}$. In this case, if $E$ happens, then $\tau_i = i + 1$, and hence $(Y_{i + 2}^i, \ldots, Y_n^i) = (X_{i + 2}, \ldots, X_n)$, meaning the conditional distributions of $(Y_{i + 2}^i, \ldots, Y_n^i)$ and $(X_{i + 2}^i, \ldots, X_n^i)$ given $E$ are the same. \\
Next, suppose that $x \ne x_{i + 1}$. Then $\tau_i \ge i + 2$, and hence
\begin{align*}
  \Pb(Y_{i + 2}^i = x_{i + 2}, \ldots, Y_n^i = x_n \mid E) &= \sum_{j = i + 2}^{n + 1} \Pb(\{Y_{i + 2}^i = x_{i + 2}, \ldots, Y_n^i = x_n\} \cap \{\tau_i = j\} \mid E) \\
  &= \sum_{j = i + 2}^{n + 1} \Pb(A_j \cap B_j \mid E),
\end{align*}
where
\begin{align*}
  A_j &:= \{X_{i + 2}^i = x_{i + 2}, \ldots, X_j^i = x_j\} \\
  B_j &:= \{X_{i + 2} \ne x_{i + 2}, \ldots, X_{j - 1} \ne x_{j - 1}, X_j = x_j \ldots, X_n = x_n\} \\
  &= \bigcup_{z_k \ne x_k \forall i + 2 \le k < j} \{X_{i + 2} = z_{i + 2}, \ldots, X_{j - 1} = z_{j - 1}, X_j = x_j, \ldots, X_n = x_n\}
\end{align*}
Now, since $(X_{i + 2}^i, \ldots, X_n^i)$ are $(X_{i + 2}, \ldots, X_n)$ are conditionally independent given $(X_1, \ldots, X_{i + 1}, X_{i + 1}^i)$,
\[\Pb(A_j \cap B_j \mid E) = \Pb(A_j \mid E) \Pb(B_j \mid E).\]
By the Markov property,
\begin{align*}
  \Pb(A_j \mid E) &= m_{i + 1}(x, x_{i + 2}) \prod_{k = i + 2}^{j - 1} m_k(x_k, x_{k + 1}) \\
  \Pb(B_j \mid E) &= \sum_{z_k \ne x_k \forall i + 2 \le k \le j} \left[ m_{i + 1}(x_{i + 1}, z_{i + 2}) \left( \prod_{l = i + 2}^{j - 2} m_l(z_l, z_{l + 1}) \right) \right. \\
  &\qquad\left. \cdot m_{j - 1}(z_{j - 1}, x_j) \left( \prod_{l = j}^{n - 1} m_l(x_l, x_{l + 1}) \right) \right] \\
  \Pb(B_j \mid E) &= \sum_{z_k \ne x_k \forall i + 2 \le k \le j} \left[ m_{i + 1}(x_{i + 1}, z_{i + 2}) \left( \prod_{l = i + 2}^{n - 1} m_l(x_l, x_{l + 1}) \right) \right]
\end{align*}
The product $\Pb(A_j \mid E) \Pb(B_j \mid E) = PQ_j$, where
\begin{align*}
  P &= m_{i + 1}(x, x_{i + 2}) \prod_{l = i + 2}^{n - 1} m_l(x_l, x_{l + 1}) \\
  Q_j &= \sum_{z_k \ne x_k \forall i + 2 \le k \le j} m_{i + 1}(x_{i + 1}, z_{i + 2}) m_{j - 1}(z_{j - 1}, x_j) \prod_{l = i + 2}^{j - 2} m_l(z_l, z_{l + 1})
\end{align*}
when $j \ge i + 3$, and $Q_{i + 2} = m_{i + 1}(x, x_{i + 2})$. But by the Markov property,
\begin{align*}
  P &= \Pb(X_{i + 2}^i = x_{i + 2}, \ldots, X_n^i = x_n \mid E) \\
  Q_j &= \sum_{z_k \ne x_k \forall i + 2 \le k \le j} \Pb(X_{i + 2} = z_{i + 2}, \ldots, X_{j - 1} = z_{j - 1}, X_j = x_j \mid E) \\
  &= \Pb(X_{i + 2} \ne x_{i + 2}, \ldots, X_{j - 1} \ne x_{j - 1}, X_j = x_j \mid E)
\end{align*}
Thus,
\[\Pb(Y_{i + 1}^i = x_{i + 1}, \ldots, Y_n^i = x_n \mid E) = P\sum_{j = i + 2}^{n + 1} Q_j.\]
Next, observe that the $Q_j$'s are conditional probabilities of disjoint events whose union is the whole sample space. Thus,
\[\sum_{j = i + 2}^{n + 1} Q_j = 1,\]
proving the first claim of the lemma. To prove the second claim, simply note that $Y_{i + 1}^i = X_{i + 1}^i$ when $\tau_i > i + 1$, and $Y_{i + 1}^i = X_{i + 1} = X_{i + 1}^i$ when $\tau_i = i + 1$.
\subsection{Proof of Lemma~\ref{lem:var_bound}}
For $0 \le i \le n$, define
\[f_i(x_1, \ldots, x_i) = \E(f(X) \mid X_1 = x_1, \ldots, X_i = x_i).\]
Then by the martingale decomposition of variance,
\[\Var(f(X)) = \sum_{i = 0}^{n - 1} \E(f_{i + 1}(X_1, \ldots, X_{i + 1}) - f_i(X_1, \ldots, X_i))^2.\]
Now note that
\begin{align*}
  &\E(f_{i + 1}(X_1, \ldots, X_{i + 1}) - f_i(X_1, \ldots, X_i))^2 \\
  &\qquad= \E(\Var(f_{i + 1}(X_1, \ldots, X_{i + 1}) \mid X_1, \ldots, X_i)) \\
  &\qquad= \frac{1}{2} \E\left( \E(f_{i + 1}(X_1, \ldots, X_{i + 1}) - f_i(X_1, \ldots, X_i))^2 \mid X_1, \ldots, X_i \right) \\
  &\qquad= \frac{1}{2} \E\left( f_{i + 1}(X_1, \ldots, X_{i + 1}) - f_{i + 1}(X_1, \ldots, X_i, X_{i + 1}^i) \right)^2,
\end{align*}
where the second identity holds since $X_{i + 1}$ and $X_{i + 1}^i$ are i.i.d.\ conditional on $X_1, \ldots, X_i$. Now,
\begin{align*}
  f_{i + 1}(X_1, \ldots, X_{i + 1}) &= \E(f(X_1, \ldots, X_n) \mid X_1, \ldots, X_{i + 1}) \\
  &= \E(f(X_1, \ldots, X_n) \mid X_1, \ldots, X_{i + 1}, X_{i + 1}^i) \\
  f_{i + 1}(X_1, \ldots, X_i, X_{i + 1}^i) &= \E(f(X_1, \ldots, X_i, X_{i + 1}^i, \ldots, X_n^i \mid X_1, \ldots, X_i, X_{i + 1}^i)) \\
  &= \E(f(X_1, \ldots, X_i, X_{i + 1}^i, \ldots, X_n^i) \mid X_1, \ldots, X_i, X_{i + 1}, X_{i + 1}^i).
\end{align*}
By Lemma~\ref{lem:conditional_dist},
\begin{align*}
  &\E(f(X_1, \ldots, X_i, X_{i + 1}^i, \ldots, X_n^i) \mid X_1, \ldots, X_{i + 1}, X_{i + 1}^i) \\
  &\qquad= \E(f(X_1, \ldots, X_i, X_{i + 1}^i, Y_{i + 2}^i, \ldots, Y_n^i) \mid X_1, \ldots, X_{i + 1}, X_{i + 1}^i) \\
  &\qquad= \E(f(X_1, \ldots, X_i, Y_{i + 1}^i, \ldots, Y_n^i) \mid X_1, \ldots, X_{i + 1}, X_{i + 1}^i).
\end{align*}
Putting everything together yields
\begin{align*}
  &\E(f_{i + 1}(X_1, \ldots, X_{i + 1}) - f_{i + 1}(X_1, \ldots, X_i, X_{i + 1}^i))^2 \\
  &\qquad= \E(\E(f(X_1, \ldots, X_n) - f(X_1, \ldots, X_i, Y_{i + 1}^i, \ldots, Y_n^i) \mid X_1, \ldots, X_{i + 1}, X_{i + 1}^i))^2 \\
  &\qquad\le \E(f(X_1, \ldots, X_n) - f(X_1, \ldots, X_i, Y_{i + 1}^i, \ldots, Y_n^i))^2.
\end{align*}
Therefore,
\[\Var f(X) \le \frac{1}{2} \sum_{i = 0}^{n - 1} \E(f(X_1, \ldots, X_n) - f(X_1, \ldots, X_i, Y_{i + 1}^i, \ldots, Y_n^i))^2,\]
as desired.

\section{Acknowledgements}
\noindent This research was funded by NSF DMS Grant 1501767.

The author would like to thank Persi Diaconis, Sourav Chatterjee, and Brett Kolesnik for helpful conversations and encouragement.

\end{document}